\definecolor{immi}{rgb}{0,.6,.1}
\long\def\change#1{{#1}}
\newbox\removebox
\newcommand\remove[2]{%
\setbox\removebox=\ifmmode\hbox{$#2$}\else\hbox{#2}\fi%
\leavevmode
\rlap{\textcolor{#1}{\vrule height0.8ex depth-0.6ex width\wd\removebox}}%
\box\removebox
}
\long\def\bigremove#1{%
\par\setbox\removebox=\vbox{#1}%
\vbox{%
\vbox to0pt{\hbox{\tikz\draw[color=blue,thick] (0,0) -- (\wd\removebox,-\ht\removebox)  (\wd\removebox,0) -- (0,-\ht\removebox);}}
\box\removebox
}
}
\def\RFss@@#1{\RF^*_{\!*#1}}
\def\RFss@_#1{\RFss@@{,#1}}
\def\RFss{\@ifnextchar_{\RFss@}{\RFss@@{}}}
\newcommand{\RF}{{\rm RF}}
\def\Supp{\operatorname{Supp}}
\def\lct{\operatorname{lct}}
\def\moi{\operatorname{moi}}
\def\ac{{\overline{\rm ac}}}
\def\acc{{{\rm ac}}}
\def\Supp{\operatorname{Supp}}
\def\11{{\mathbf 1}}
\def\AA{{\mathbb A}}
\def\CC{{\mathbb C}}
\def\FF{{\mathbb F}}
\def\NN{{\mathbb N}}
\def\PP{{\mathbb P}}
\def\QQ{{\mathbb Q}}
\def\ZZ{{\mathbb Z}}
\def\cM{{\mathcal M}}
\def\cO{{\mathcal O}}
\def\cZ{{\mathcal Z}}
\newcommand{\Q}{{\mathbb Q}}
\newcommand{\llparenthesis}{(\negthinspace(}
\newcommand{\rrparenthesis}{)\negthinspace)}
\newtheorem{thm}{Theorem}[section]
\newtheorem{lem}[thm]{Lemma}
\newtheorem{cor}[thm]{Corollary}
\newtheorem{prop}[thm]{Proposition}
\theoremstyle{definition}
\newtheorem{defn}[thm]{Definition}
\newtheorem{def-prop}[thm]{Proposition-Definition}
\newtheorem{def-theorem}[thm]{Theorem-Definition}
\newtheorem{def-lem}[thm]{Lemma-Definition}
\theoremstyle{remark}
\newtheorem{remark}[thm]{Remark}
\theoremstyle{plain}
\numberwithin{equation}{section}
\DeclareMathOperator*{\Spec}{Spec}
\newcommand{\ord}{\operatorname{ord}}
\def\cO{\mathcal{O}}
\def\frm{\mathfrak{m}}
\def\frq{\mathfrak{q}}
\renewcommand{\phi}{\varphi}
\renewcommand{\epsilon}{\varepsilon}
\renewcommand{\theta}{\vartheta}
\def\Q{{\mathbf Q}}
\renewcommand{\and}{ \quad \text{and} \quad }
\begin{document}

\setcounter{tocdepth}{1} 

\author[R.~Cluckers]
{Raf Cluckers}
\address{Universit\'e de Lille, Laboratoire Painlev\'e, CNRS - UMR 8524, Cit\'e Scientifique, 59655
Villeneuve d'Ascq Cedex, France, and,
KU Leuven, Department of Mathematics,
Celestijnenlaan 200B, B-3001 Leu\-ven, Bel\-gium}
\email{Raf.Cluckers@univ-lille.fr}
\urladdr{http://rcluckers.perso.math.cnrs.fr/}

\author[M.~Musta{\c{t}}{\v{a}}]{Mircea Musta{\c{t}}{\v{a}}}
\address{Department of Mathematics, University of Michigan, Ann Arbor, MI 48109, USA}
\email{mmustata@umich.edu}

\author[K.~H.~Nguyen]
{Kien Huu Nguyen}
\address{KU Leuven, Department of Mathematics,
Celestijnenlaan 200B, B-3001 Leu\-ven, Bel\-gium}
\email{kien.nguyenhuu@kuleuven.be}

\thanks{The authors R.C. and K.H.N. are partially supported by the European Research Council under the European Community's Seventh Framework Programme (FP7/2007-2013) with ERC Grant Agreement nr. 615722
MOTMELSUM, by the Labex CEMPI  (ANR-11-LABX-0007-01), and by KU Leuven IF C14/17/083. \change{K.H.N. is partially supported by Fund for Scientific Research - Flanders (Belgium) (F.W.O.) 12X3519N.} M.M. is partially supported by NSF grant DMS-1701622.
The authors would like to thank \change{Ben Lichtin}, Andrei Musta\c{t}\u{a}, Johannes Nicaise, Antonio Rojas-Le\'on, and Wim Veys for advice and interesting discussions on the topics of this paper and \change{thank the referee for valuable comments. The authors are grateful} to Jan Denef for his long term guidance in this subject.}

\subjclass[2010]{Primary 11L07, 11S40; Secondary 11L05, 14E30}

\keywords{$p$-adic exponential sums, log resolutions, log canonical threshold, Igusa's conjecture on exponential sums, motivic oscillation index, decay, Igusa's local zeta functions, Denef's formula for local zeta functions, $p$-adic integrals, relations on numerical data of log resolutions}

\begin{abstract}
We prove an upper bound on the log canonical threshold of a hypersurface that satisfies a certain power condition
and use it to prove several generalizations of Igusa's conjecture on exponential sums, with the log-canonical threshold in the exponent of the estimates.  We show that this covers optimally all situations of the conjectures for non-rational singularities, by comparing the log canonical threshold with a local notion of the motivic oscillation index.
\end{abstract}
\title[Igusa's conjecture for exponential sums ]
{Igusa's conjecture for exponential sums: optimal estimates for non-rational singularities}

\maketitle

\section{Introduction}

\subsection*{} Igusa's conjecture on exponential sums predicts upper bounds for $|S_f(a)|$
in terms of $a$, where $f$ is a nonconstant polynomial over $\ZZ$ in $n$ variables, $a$ runs over the positive integers, and $S_f(a)$ is the finite exponential sum
\begin{equation}\label{eq:Sfa}
S_{f}(a):= \frac{1}{a^n} \sum_{x\in (\ZZ/a\ZZ)^n} \exp \left(\frac{2\pi i f(x) }{ a }\right).
\end{equation}
If $a$ runs only over the positive powers $p^m$ of a fixed prime number $p$, these bounds are well-known and proved by Igusa; the key point of his conjecture is about varying the prime $p$, as follows. Suppose that for some real $\sigma>0$ and for each prime $p$ there exists a constant $c_p>0$ such that 
\begin{equation}\label{eq:sigma}
|S_{f}(p^m)|  <c_p p^{-m\sigma} \mbox{ for each integer $m\geq 2$}.
\end{equation}
Then the question 
is generally whether one can take $c_p$ independently of the prime $p$ (but still depending on $\sigma$ and $f$). In a more explicit form, Igusa puts forward precise values for $\sigma$, the infimum of which relates to the log-canonical threshold of $f$ in the case of non-rational singularities, and, more generally, to the motivic oscillation index (see below). Conditions on $f$ that were originally imposed by Igusa \cite[p.~2 and 170]{Igusa3} (like the homogeneity of $f$ and bounds on $\sigma$) have been relaxed in several later variants of his question \cite{DenSper, CDenSperlocal, CVeys, CAN}.

In this paper we prove these later variants with the log-canonical threshold playing a key role both in the exponent of the upper bounds and in the proofs. The appearance of the log-canonical threshold in the exponent of the upper bounds is optimal in many cases: it is only when the hypersurfaces defined by $f-c$, for $c\in {\mathbb C}$, have at worst rational singularities, that there is room for an improved exponent, see Section \ref{sec:opt}.

We derive the bounds for the exponential sums from an upper bound on the log-canonical threshold of the hypersurface defined by $f$
in the presence of a certain \emph{power condition}. This is described in terms of a log resolution for the hypersurface.
We prove this upper bound by making use of a finiteness result concerning certain divisorial valuations, result which follows from the recent progress in the Minimal Model Program
\cite{BCHM}.
Deriving Igusa's conjecture from the log-canonical threshold bound relies on
several subtle results on Igusa's local zeta functions, most of which can be found in the overview paper \cite{DenefBour} by Denef. These allow us to reduce to finite field exponential sums with multiplicative characters that can be bounded in a way matching our bounds on the log-canonical threshold.

We mention that while Igusa's conjectured upper bounds are very natural, his motivation came from
their role in obtaining ad\`{e}lic integrability properties, which in turn were crucial for proving
Poisson summation formulae throughout his work \cite{Igu-Invent, Igu-Pfaff, Igu-Poiss1, Igusa2, Igu-exp, Igu-Poiss2, Igusa3}. \change{These Poisson summation formulas are a step in  Igusa's program towards new local-global principles, see \cite[p.~240]{Igu-exp}.}

\subsection{}\label{subsec:bound}

We begin by stating our result on log canonical thresholds.
 Let $X$ be a smooth complex algebraic variety and $D$ a non-empty hypersurface in $X$ defined by
$f\in \cO_X(X)$. We consider a log resolution  $h\colon Y\to X$ of the pair $(X,D)$,
which is an isomorphism over $X\smallsetminus D$. Therefore
$h$ is a projective morphism, $Y$ is smooth, and $h^*(D)$ is a divisor with simple normal crossings. Note that by our assumption on $h$,
the relative canonical divisor $K_{Y/X}$ is supported on $h^*(D)_{\rm red}$.

We write
$$h^*(D)=\sum_{i=1}^N N_i E_i\quad\text{and}\quad K_{Y/X}=\sum_{i=1}^N (\nu_i-1)E_i,$$
for positive integers $N_i$, $\nu_i$, \change{and prime divisors $E_{i}$},
so that the log canonical threshold $\lct(f)$ of $f$ is given by
$$\lct (f)=\min_i\frac{\nu_i}{N_i}$$
(see, for example, \cite{Mustata2} for an introduction to log canonical thresholds).
For every subset $I\subseteq\{1,\ldots,N\}$, we put
$$E_I=\bigcap_{i\in I}E_i\quad\text{and}\quad E_I^{\circ}=E_I\smallsetminus\bigcup_{i\not\in I}E_i,$$
with $E_\emptyset=Y$.
By construction, given a (possibly non-closed) point $P\in Y$, there is a unique $I\subseteq \{1,\ldots,N\}$ such that
$P\in E_I^{\circ}$; moreover, there is an algebraic system of coordinates $x_1,\ldots,x_n$ in a neighborhood $X_0$ of $P$ such that,
after relabeling so that $I=\{1,\ldots,m\}$, we have $E_i\cap X_0=V(x_i)$ if and only if $i\leq m$ and we can write
\begin{equation}\label{eqn1_bound_lct}
f\circ h\vert_{X_0}=u\cdot\prod_{i=1}^mx_i^{N_i},
\end{equation}
where $u\in\cO_Y(X_0)$ is an invertible regular function on $X_0$.

For a closed subset (or closed subscheme) $Z$ of $X$, having non-empty intersection with $D$, we denote by $\lct _Z(f)$
the largest log canonical threshold $\lct (f\vert_V)$, where $V$ is an open neighborhood of $Z$.
Note that we have $\lct_Z(f)=\min_{x\in Z\cap D}\lct_x(f)$.

\begin{thm}\label{thm_bound}
Suppose that there is a non-empty
subset $I\subseteq \{1,\ldots,N\}$ and an open subset $X_0$ as above such that $X_0\cap E_I^{\circ}$ is non-empty
and
\begin{equation}\label{eq:power}
u\vert_{X_0\cap E_I^{\circ}}=g^d
\end{equation}
for some integer $d>1$ with $d\vert N_i$ for all $i\in I$ and some $g\in\cO(X_0\cap E_I^{\circ})$.
In this case, we have the following upper bound
for the log canonical threshold of $f$:
\begin{equation}\label{eq_thm}
\lct (f)\leq \frac{1}{d}+\sum_{i\in I}N_i\left(\frac{\nu_i}{N_i}-\lct (f)\right).
\end{equation}
More generally, if
$\overline{h(X_0\cap E_I^{\circ})}\cap Z\neq\emptyset$, then we have
\begin{equation}\label{eq_thm_Z}
\lct _Z(f)\leq \frac{1}{d}+\sum_{i\in I}N_i\left(\frac{\nu_i}{N_i}-\lct _Z(f)\right).
\end{equation}
\end{thm}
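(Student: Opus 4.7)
The plan is to exhibit a divisorial valuation $v$ over $X$ whose center meets $\overline{h(X_0\cap E_I^\circ)}$ (respectively $Z$) and satisfies $\nu_v/N_v\leq(1/d+\sum_{i\in I}\nu_i)/(1+\sum_{i\in I}N_i)$; since $\lct(f)\leq\nu_v/N_v$ for any such $v$ (and $\lct_Z(f)\leq\nu_v/N_v$ when the center meets $Z$), this yields (\ref{eq_thm}) (respectively (\ref{eq_thm_Z})) after the algebraic rearrangement $\lct(f)\cdot(1+\sum N_i)\leq 1/d+\sum\nu_i$.

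I would first observe that (\ref{eq_thm_Z}) follows from (\ref{eq_thm}) applied over a small open neighborhood of $Z$, which inherits the log resolution, the open $X_0$, and the power condition. Next I reduce to $|I|=1$ via a single blowup: let $\pi\colon Y'\to Y$ be the blowup along the smooth subvariety $E_I=\bigcap_{i\in I}E_i$ of codimension $m=|I|$. Then $h\circ\pi\colon Y'\to X$ is again a log resolution, and the new exceptional divisor $E^\star$ appears with $N_{E^\star}=\sum_{i\in I}N_i$ and $\nu_{E^\star}=\sum_{i\in I}\nu_i$, by a standard discrepancy calculation. In a blowup chart with coordinates $(y_1,\ldots,y_{m-1},t,x_{m+1},\ldots,x_n)$ (so $x_m=t$ and $x_j=ty_j$ for $j<m$), the pullback of $f$ becomes
\[
u\cdot\prod_{j<m}y_j^{N_j}\cdot t^{N_{E^\star}};
\]
at a generic point of $E^\star$ above $E_I^\circ\cap X_0$, the combined unit $u\cdot\prod_{j<m}y_j^{N_j}$ restricts on $E^\star$ to $\bigl(g\cdot\prod_{j<m}y_j^{N_j/d}\bigr)^d$, using the hypothesis $u|_{E^\star}=g^d$ together with $d\mid N_j$. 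Hence the power condition is inherited by $E^\star$ with the same $d$, and the $|I'|=1$ case of the theorem applied to $(Y',\{E^\star\})$ returns exactly (\ref{eq_thm}) for the original $I$.

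For the remaining $|I|=1$ case, we have $h^\star(f)=u\,x_1^{N_1}$ with $u|_{E_1^\circ\cap X_0}=g^d$ and $d\mid N_1$. Since $g$ is a unit, Hensel's lemma applied to $T^d-u$ (with root $g$ and unit derivative $dg^{d-1}$) allows us, after passing to an \'etale neighborhood of a chosen $P\in E_1^\circ\cap X_0$, to assume $u=\widetilde g^d$ on $X_0$, so $h^\star(f)=\Phi^d$ with $\Phi=\widetilde g\,x_1^{N_1/d}$. The task is to find $v$ over $X$ with center meeting $h(X_0\cap E_1^\circ)$ and $\nu_v/N_v\leq(1/d+\nu_1)/(1+N_1)$. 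This is the crux of the argument: a direct weighted-blowup construction on $Y$ (or on an \'etale cover realizing the $d$-th root) only yields divisors whose $\nu/N$-ratio approaches $\nu_1/N_1$ from above, which is insufficient precisely when $\nu_1/N_1>1/d$. To overcome this, I invoke the finiteness result for divisorial valuations with bounded log discrepancy coming from \cite{BCHM}: the log canonical places of the relevant pairs on $Y$ form a bounded family realizable on a single birational model, and within this family the presence of the global $d$-th root $\Phi$ of $f\circ h$ forces an extremal divisor realizing the predicted ratio, whose numerator $1/d+\nu_1$ and denominator $1+N_1$ reflect the interaction between the discrepancy data of $E_1$ and the order-$d$ structure of $\Phi$.

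The main obstacle is step three: the target ratio $(1/d+\nu_1)/(1+N_1)$ is visible combinatorially from $(\nu_1,N_1,d)$, but it cannot in general be realized by any explicit weighted blowup of $Y$, so the MMP-based finiteness result from \cite{BCHM} is essential to extract the required extremal divisor.
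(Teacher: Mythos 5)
Your two reductions are correct and essentially match the paper's: passing from (\ref{eq_thm_Z}) to (\ref{eq_thm}) by shrinking to a neighborhood of $Z$ is the paper's Remark preceding Remark~\ref{rmk1_thm}, and passing to $|I|=1$ by blowing up $E_I$ and checking the inherited power condition on the exceptional divisor is exactly Remark~\ref{rmk2_thm}. The problem is the $|I|=1$ case, which you explicitly label the crux and then do not prove. You describe an obstacle and assert that \cite{BCHM} ``forces an extremal divisor'' with the desired ratio $\nu_v/N_v\leq(\nu_1+1/d)/(N_1+1)$, but there is no theorem one can quote to that effect: finiteness or boundedness of families of divisorial valuations does not, by itself, produce a divisor with a \emph{prescribed} ratio, and the improvement encoded in the term $1/d$ must come from a construction that actually uses the power condition, not from an abstract finiteness statement. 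Your claim that a weighted blowup on $Y$ only gives $\nu/N$-ratios approaching $\nu_1/N_1$ is a correct observation about why the argument cannot stay on $Y$, but it does not tell you what to do instead.

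The paper's use of \cite{BCHM} is quite different and much more modest: after reducing further to $\nu\leq N$ and to the existence of $0<c<\lct(f)$ with $\nu-cN<1$ (Remarks~\ref{rmk2.5_thm}, \ref{rmk3_thm}), the pair $(X,cD)$ is klt, and \cite[Corollary~1.4.3]{BCHM} is used only to \emph{extract} the divisor $E$ as the unique exceptional, $\QQ$-Cartier prime divisor $E_W$ on a projective birational model $\pi\colon W\to X$. The actual inequality then comes from three steps that are entirely absent from your proposal: (a) a cyclic cover $\phi\colon U\to V$ over an open $V\subseteq W$ along a Cartier multiple of $E_W$, which is \'etale in codimension one (so $K_{U/V}=0$) and makes both $E_U=\phi^*(E_W)$ and $D_U=\phi^*(\widetilde D)$ Cartier; (b) a smoothness assertion for $U$ at the generic point of any component $Z'$ of $D_U\cap E_U$, proved by contradiction using the terminal-singularity Lemma~\ref{lem_terminal}; and (c) the local intersection-multiplicity estimate of Lemma~\ref{bound_int_multiplicity}, which converts the power condition into $\ell_R\bigl(R/(g_1,h)\bigr)\geq d$ in the two-dimensional local ring $R=\cO_{U,Z'}$, after which an explicit monomial valuation of weights $(d,1)$ with log discrepancy $d+1$ yields the bound directly (arguing by contradiction from $\lct(f)>(d\nu+1)/(dN+d)$). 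Your Hensel's-lemma \'etale extraction of a $d$-th root of $u$ on $Y$ is harmless but not the right cover: what is needed is the cyclic cover along $E_W$ on the extracted model $W$, which interacts with $\widetilde D$ in a way the Kummer-type cover on $Y$ does not. Without steps (a)--(c) there is no proof of the $|I|=1$ case, and hence no proof of the theorem.
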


\begin{remark}
\change{A slightly more restrictive form of the condition} in (\ref{eq:power}) of Theorem \ref{thm_bound} is formalized and coined a ``power condition"  in Section \ref{sec:resolK} below. 
Note that the requirement $d\vert N_i$ for all $i\in I$ in the theorem, makes the condition (\ref{eq:power})
 independent of the choice of local coordinates $x_1,\ldots,x_n$.
\end{remark}

\subsection{}
Let us now formulate our main results on exponential sums. We work over a ring of integers $\cO$, instead of over $\ZZ$.

Let $f$ be a non-constant polynomial in $\cO[x]$ in $n$ variables $x=(x_1,\ldots,x_n)$.
Let $Z$ be a closed subscheme of $\AA_\cO^n$.
Let $L$ be a local field over the ring $\cO$, that is, a finite field extension of $\QQ_p$ or of $\FF_p\llparenthesis t\rrparenthesis$
for some prime number $p$ such that moreover there is a unit-preserving ring homomorphism $\cO\to L$.
We denote by $\cO_L$  the valuation ring of $L$ and by $|dx|$ the Haar measure on $L^n$, normalized so that $\cO_L^n$ has measure $1$. The number of elements in the residue field $k_L$ of $L$ is $q_L$ and equals a power of a prime number $p_L$.
Let $\psi\colon L\to\CC^\times$ be a nontrivial additive character on $L$, that is, a (nontrivial) continuous group homomorphism from the additive group of $L$ to $\CC^\times$.
For such data, consider the integral
\begin{equation}\label{eq:EfZ}
E_{f,L,\psi}^{Z} := \int_{\{x\in \cO_{L}^n\mid \overline x\in Z(k_L) \} }  \psi(f(x))|dx| ,
\end{equation}
where $\overline x$ stands for the image of $x$ under the natural projection $\cO_{L}^n \to k_L^n$. We write
$
E_{f,L,\psi}
$
for $E_{f,L,\psi}^{Z}$ when $Z= \AA_\cO^n$.
Note that the integrals $E_{f,L,\psi}^{Z}$ are in fact finite exponential sums which include the above sums $S_{f}(p^m)$, with $p$ a prime number, as special cases. Moreover, estimating $S_{f}(p^m)$ is the key to estimating $S_f(a)$ for general $a$, by the Chinese remainder theorem. In what follows, we write $\lct_Z(f)$ for $\lct_{Z_{\CC}}(f)$.

\begin{defn}\label{defn:mpsi}
For a nontrivial additive character $\psi$ on $L$, let $m_{\psi}$ be the unique integer $m$ such that $\psi$ is trivial on $\varpi_L^{m} \cO_L$ and nontrivial on $\varpi_L^{m-1}\cO_L$, where $\varpi_L$ is a uniformizer of $\cO_L$.
\end{defn}

Note that for any additive character $\psi$ on $L$ with $m_\psi=0$ and any $z\in L^\times$, the character $\psi_z$ sending $x\in L$ to $\psi(zx)$ satisfies
$$
m_{\psi_z} = - \ord (z),
$$
and that all nontrivial additive characters on $L$ are of the form $\psi_z$ for varying $z$, see e.g.~\cite{Igusa:intro}.

\begin{defn}\label{defn:sigma0}
For a non-constant polynomial $f$ with coefficients in $\CC$ and any subset $Z$ of $\CC^n$, let
$$
\sigma_Z(f)=\min\{\lct_x(f-b)\mid x\in Z,\ b=f(x)\}.
$$
If $Z=\CC^n$, then we simply write $\sigma(f)$ for $\sigma_Z(f)$. Given a closed subscheme $Y$ of $\AA^n_\cO$ or of $\AA^n_\CC$, we
write $\sigma_Y(f)$ for $\sigma_{Y(\CC)}(f)$.
\end{defn}

\begin{thm}[Exponential sums around $Z$]
\label{thm:sum}
\label{CVlct}
If $f\in\cO[x_1,\ldots,x_n]$ is a non-constant polynomial, and $Z$ is any closed subscheme of $\AA_\cO^n$, then
there exist $c>0$ and $M>0$ such that
\begin{equation}\label{eq:global}
|E^Z_{f,L,\psi}| <  c m_{\psi}^{n-1} q_L^{- \sigma_Z(f) m_{\psi} }
\end{equation}
for all local fields $L$ over $\cO$ whose residue field characteristic is at least $M$ and for all nontrivial additive characters $\psi$ on $L$ satisfying $m_{\psi}\geq 2$.
\end{thm}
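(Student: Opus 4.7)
My plan is to reduce $E^Z_{f,L,\psi}$ to a finite sum of explicit local contributions via a log resolution $h\colon Y\to\AA^n$ of $(\AA^n,\mathrm{div}(f))$, and then to bound each contribution using Theorem~\ref{thm_bound} together with standard Weil-type estimates for finite-field exponential sums twisted by multiplicative characters. I first choose $h$ defined over $\cO[1/N]$ for some integer $N$ and enlarge $M$ so that for $p_L\geq M$ the reduction of $h$ modulo $\varpi_L$ is still a log resolution of $(\AA^n_{\cO_L},\mathrm{div}(f))$; the uniformity in $L$ needed later comes from standard constructibility of log resolutions in families.

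Pulling back via $h$ and applying the change-of-variables formula (so that $|dx|$ becomes $\prod_{i\in I}|y_i|^{\nu_i-1}|dy|$ in the local coordinates of (\ref{eqn1_bound_lct})), and then partitioning $Y(\cO_L)$ according to the strata $E_I^\circ$, I get
\[
E^Z_{f,L,\psi}=\sum_I\int_{\{y\in h^{-1}(Z)(\cO_L)\,:\,\bar y\in E_I^\circ(k_L)\}}\psi\!\Bigl(u(y)\prod_{i\in I}y_i^{N_i}\Bigr)\prod_{i\in I}|y_i|^{\nu_i-1}\,|dy|.
\]
On each stratum, writing $y_i=\varpi_L^{e_i}t_i$ with $t_i\in\cO_L^\times$ and $e_i\in\ZZ_{\geq 0}$, the integral becomes a sum over $(e_i)_{i\in I}$ of products of a finite-field character sum over the units $t_i$ (together with the transversal coordinates $y'$ parametrizing $E_I^\circ$) with the weight $q_L^{-\sum_{i\in I}e_i\nu_i}$. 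Since oscillation forces $\sum_i e_iN_i\leq m_\psi$, only $O(m_\psi^{|I|-1})$ tuples $(e_i)$ contribute effectively per stratum, and summing over all $I$ produces the polynomial factor $m_\psi^{n-1}$.

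The crux of the argument is the fate of the resulting character sums as $p_L\to\infty$. By Weil's bound and its extensions collected in \cite{DenefBour}, these sums exhibit square-root cancellation \emph{unless} a nontrivial multiplicative compatibility forces the character to become principal, and this is precisely the power condition (\ref{eq:power}): some integer $d>1$ with $d\mid\gcd_{i\in I}N_i$ makes the reduction of $u|_{E_I^\circ}$ a $d$-th power on an open set meeting $h^{-1}(Z)$. If no such $d$ exists, the cancellation alone produces a decay strictly better than $q_L^{-m_\psi\lct_Z(f)}$ on that stratum. If such a $d$ does exist, cancellation is lost, but precisely then Theorem~\ref{thm_bound} applies and rearranges the inequality (\ref{eq_thm_Z}) into the bound $\lct_Z(f)-1/d\leq\sum_{i\in I}N_i(\nu_i/N_i-\lct_Z(f))$; combined with the elementary exponent $-m_\psi/d$ produced by the trivially non-cancelling Gauss sum, this yields exactly the required contribution of size $q_L^{-m_\psi\lct_Z(f)}$ from the stratum.

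Finally, to replace $\lct_Z(f)$ in the exponent by the sharper $\sigma_Z(f)=\min\{\lct_x(f-b):x\in Z,\,b=f(x)\}$, I decompose $E^Z_{f,L,\psi}$ as a finite sum over residue disks in $Z(k_L)$, factor out the unit-modulus constant $\psi(f(x_0))$ on the disk around a point $x_0$, and apply the argument above to $f-f(x_0)$ in place of $f$. The main obstacle will be the bookkeeping in the previous paragraph, namely to match precisely the gain coming from Theorem~\ref{thm_bound} against the exponent produced by the Gauss sum, and the uniformity of the constant $c$ across all $L$ with $p_L\geq M$; both are addressable by the uniform versions of Weil's bound and of log resolutions in families already present in the Igusa/Denef framework surveyed in \cite{DenefBour}.
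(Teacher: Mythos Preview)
Your overall strategy is the same as the paper's: pull back by a log resolution, stratify, and exploit the dichotomy ``either Weil-type square-root cancellation, or the power condition holds and Theorem~\ref{thm_bound} applies.'' But the bookkeeping step you yourself flag as the main obstacle is where the argument breaks. You assert that when the power condition holds with exponent $d$, the ``trivially non-cancelling Gauss sum'' produces an exponent $-m_\psi/d$. It does not: the Gauss sum $g_{\chi^{-1},\psi}$ has size $q_L^{1/2}/(q_L-1)$ \emph{independent of $m_\psi$} (Proposition~\ref{004}). All the $m_\psi$-dependence sits in the coefficient
\[
\mathrm{Coeff}_{t^{m_\psi-1}}\prod_{i\in I}\frac{t^{N_i}q_L^{-\nu_i}}{1-t^{N_i}q_L^{-\nu_i}}
\;\le\; q_L^{-(m_\psi-1)\lct_Z(f)+\sigma_I}\,m_\psi^{\#I-1},
\qquad \sigma_I=-\sum_{i\in I}(\nu_i-N_i\lct_Z(f))
\]
(Proposition~\ref{estimation1}). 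The role of Theorem~\ref{thm_bound} (via Corollary~\ref{cor:num}) is to give $\lct_Z(f)+\sigma_I\le 1/d\le 1/2$, which is precisely what is needed to absorb the factor $q_L^{1/2}$ coming from the Gauss sum. So the matching is between two \emph{constants in $m_\psi$}, not between $m_\psi/d$ and $m_\psi\lct_Z(f)$; the paper even notes after Corollary~\ref{cor:num} that the weaker bound with $1/2$ in place of $1/d$ already suffices here.

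Two further points. First, your direct integral expansion does not address the trivial-character piece (the first two terms of Proposition~\ref{004}): the tuples $(e_i)$ with $\sum_i e_iN_i\ge m_\psi$ contribute the measure of a sublevel set, and bounding this by $q_L^{-m_\psi\lct_Z(f)}$ is a separate estimate which the paper imports as Proposition~\ref{trivial} from \cite{Saskia-Kien}. Second, your reduction from $\lct_Z(f)$ to $\sigma_Z(f)$ by recentring at each residue-disk point $x_0$ is shaky, since $f(x_0)\in\cO_L$ need not be algebraic over $K$ and you cannot fix a log resolution of $f-f(x_0)$ uniformly in $L$. The paper instead observes that for $m_\psi\ge 2$ only the finitely many \emph{critical values} $z_i$ of $f$ contribute (via \cite[Remark~4.5.3]{DenefBour} and \cite[Lemma~5.1]{Saskia-Kien}); these are algebraic, so one passes to $f-z_i$ over the rings $\cO_{z_i}$.
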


The variant of Theorem \ref{thm:sum} with $Z=\{0\}$ is the Denef-Sperber conjecture from \cite{DenSper}, with the log-canonical threshold in the exponent.
Theorem \ref{CVlct}  with $Z=\AA_\cO^n$ covers the variant \cite[Conjecture 1.2 (1.2.1)]{CVeys} of Igusa's conjecture. 
At the end of Section \ref{igusa}, we will state and prove a version of Theorem \ref{CVlct} which is moreover uniform in the choice of $Z$, thus solving and generalizing the complete Conjecture 1.2 from \cite{CVeys}.
We moreover show the optimality of these estimates in the case of non-rational singularities, by providing lower bounds in Section \ref{sec:opt}, where we also formulate the remaining part of Igusa's conjecture with precise and optimal estimates with the so-called motivic oscillation index of $f$ around $Z$ in the exponent.
We also give an application of Theorem \ref{thm_bound} about poles of maximal possible order of Igusa's local zeta functions in the twisted case, see Section \ref{subsec:poles}.

\subsection{Remarks on
Igusa's conjecture}\label{sec:variants}

Igusa \cite[p.~2 and 170]{Igusa3} originally imposed two extra conditions in his conjecture: he focused on the case of homogeneous $f$ (mainly because in that case
 $0$ is the unique critical value), and he assumed  $\sigma>1$ in (\ref{eq:sigma}), since he wanted to derive ad\`{e}lic $L^1$-integrability of an ad\`elic variant of $E_{f,L,\psi}$ from his conjectural bounds on  exponential sums. In Igusa's situation, lower values of $\sigma$ in (\ref{eq:sigma}) yield only ad\`{e}lic $L^q$-integrability for higher $q$.

The exponent
$\sigma_Z(f)$
in Theorem \ref{CVlct} is not always optimal. However, it is optimal in the case that the hypersurface given by $f-b$ for some $b\in\CC$ has some non-rational singularities on each open $V$ containing $Z_\CC$.  We show this optimality in two steps: we introduce the notion of motivic oscillation index of $f$ around $Z$, denoted by $\moi_Z(f)$  (as a variant of a notion of \cite{Cigumodp}, in particular with a different sign), we show lower bounds of $|E^Z_{f,L,\psi}|$ with   $\moi_Z(f)$ in the exponent instead of $\sigma_Z(f)$, and we compare the values of $\moi_Z(f)$ with $\sigma_Z(f)$ in the case of non-rational singularities.
It may be interesting to study a relation between the motivic oscillation index and the  notion of
minimal exponents introduced in \cite[p.~52]{Saito-rational}. In the non-homogeneous case, the case $m_\psi=1$ can be problematic if one uses the motivic oscillation index in the exponent, as witnessed by $f(x,y)= x^2y-x$, see Example 7.2 of \cite{Cigumodp}. However, the case $m_\psi=1$ still makes sense by \cite{Cigumodp} for weighted homogeneous $f$ (and even possibly more generally, see the discussion following Conjecture 1.2.2 in \cite{CAN}).
For general $f$, it is more sensible to restrict to $m_\psi\geq 2$, as observed in \cite{CVeys}, where moreover the case $m_\psi\leq 4$ is proved.

Let us now discuss some previously proved cases of Igusa's conjecture on exponential sums, and its variants.
The case of the above theorems where $\sigma_Z(f)\leq 1/2$  has been recently obtained in \cite{Saskia-Kien}. Igusa treated (optimally) the case of homogeneous polynomials $f$ having an isolated singularity at $0$, see
 \cite[Section 5.3]{Igusa3}. 
For polynomials $f$ that are non-degenerate with respect to their Newton polyhedron at the origin, the exponential sums $E_{f,L,\psi}$ and $E^{\{0\}}_{f,L,\psi}$ are well understood,  see \cite{DenSper}, \cite{CDenSper}, \cite{CDenSperlocal}, and \cite{CAN}. Moreover, in the non-degenerate case, it is expected that the bounds from \cite{DenSper, CDenSper, CDenSperlocal, CAN} are optimal, see the questions about optimality and a certain vertex condition from \cite[Thms 5.17 - 5.19]{DenHoor} and \cite{Hoorn}.
For homogeneous polynomials in 3 variables and for weighted homogeneous polynomials in 2 variables, see
\cite{Lichtin4, Lichtin2, JWright}. \change{It is most likely that Lichtin's method based on good wedge decomposition from \cite{Lichtin4, Lichtin2}, in combination with Corollary \ref{cor:num} and with Cochrane's bounds from \cite[Theorem 1]{Cochrane}, can also be used to yield Theorem \ref{thm:sum}, and, a similar remark holds for the method of \cite{Saskia-Kien} based on arc spaces; both these approaches can probably avoid the use of multiplicative characters.}

\section{Proof of Theorem \ref{thm_bound}}\label{sec:bound}

\subsection{Log resolutions}\label{sec:resolK}
We first fix some terminology for log resolutions, based on  \cite{Hir:Res}. Let $K$ be a subfield of $\CC$ and $X_K$ a smooth,
geometrically connected variety over $K$. In the applications to Igusa's conjectures, we will take $X_K={\mathbb A}_K^n$, but in this section it is convenient to set up the notation in a more general setting. For a field extension $K'$ of $K$, we put $X_{K'}=X_K\times_{\Spec(K)}\Spec(K')$.
Given a nonzero, non-invertible \change{$f\in \cO_{X_K}(X_K)$}, we denote by $D_K$ the closed subscheme of $X_K$ defined by $f$ and put $D=D_{\CC}$.
Let $h\colon Y\to X_K$ be a log resolution of the pair $(X_K,D_K)$ (the existence of such $h$ follows from \cite[page 142, Main Theorem II]{Hir:Res}).

More precisely, we have:
 \begin{itemize}
\item $Y$ is a smooth closed subscheme of $\PP_{X_K}^{k}$, for some $k\geq 0$.
\item $h$ is a proper birational morphism which is an isomorphism over the complement of the support of $D_K$.
\item The divisor  $h^*(D_K)$ on $Y$ equals $\sum_{j\in T_K} N_{j}E_{j}$ for a finite set $T_K$  and some positive integers $N_j$, where each $E_{j}$ is
a prime divisor.
\item The divisor $h^*(D_K)$ has simple normal crossings, that is, if $I\subseteq T_K$ is such that $i\in I$ if and only if $a\in E_i$
and if we write in some neighborhood $V$ of $a$
\begin{equation}\label{fhuK}
f\circ h_{|V}=u\prod_{i\in I} y_{i}^{N_{i}},
\end{equation}
with $y_i\in\cO_Y(V)$ an equation of $E_i$ in $V$ and $u\in\cO_Y(V)$ invertible, then the images of
$(y_{i})_{i\in I}$ in the stalk
$\mathcal{O}_{Y,a}$ at $a$ are part of a regular system of parameters.
\item The relative canonical divisor $K_{Y/X_K}$, which is locally defined by the determinant of the Jacobian matrix of $h$, is written as
$\sum_{j\in T_K} (\nu_j-1)E_j$ for some positive integers $\nu_j$.
\end{itemize}
For any subset $I\subseteq T_K$, we put $E_I := \bigcap_{i\in I} E_i$
if $I$ is non-empty and $E_{I}  = Y$ if $I$ is the empty set.
Further, we write
$$
E_{I}^{\circ} := E_I \smallsetminus \bigcup_{i\not \in I} E_i.
$$

By the functoriality of log resolutions for extensions of the base field,
for any field $K'$ containing $K$,
$h$ induces a log resolution $h_{K'}\colon Y_{K'} \to X_{K'}$ of
the pair $(X_{K'},D_{K'})$.
We note that each irreducible component $E_{i}$ for $i\in T_K$ splits into a disjoint union of finitely many irreducible components $E_{ij}$ over $K'$ with $(i,j)\in T_{K'}$ for a corresponding finite set $T_{K'}$, and where we always have $N_{i}=N_{ij}$ and $\nu_{i}=\nu_{ij}$.

When $K'=\CC$, we write  $J$ for
$T_{K'}$. We say that $I'\subseteq J$ corresponds to $I\subseteq T_K$ if $I'$ \change{ranges over} the irreducible  components (over $\CC$) of the $E_i$ for $i\in I$.

\begin{defn}[Power condition for $(f,h,Z)$] \label{defn:pow}
Suppose now that $K=\CC$ and let
 $f$ and  $h\colon Y\to X$ be as above \change{with $X=X_\CC$}. Let $Z$ be a closed subvariety of $X$ such that $f$ vanishes on $Z(\CC)$.
Consider a non-empty open subset $W$ of an irreducible component of $E_I$ for some $I\subseteq J$,
let $g$ be in $\mathcal{O}_W(W)$, and $d>1$ be an integer.
We say that $(f,h,Z)$ satisfies the power condition, witnessed by  $(I,W,g,d)$, if the following conditions hold:
\begin{equation}\label{eq:ugd0}
h(W)\subseteq Z,
\end{equation}
\begin{equation}\label{eq:ugd00}
d|N_{i} \mbox{ for all } i\in I,
\end{equation}
and
\begin{equation}\label{eq:ugd}
u|_{W}=g^{d},
\end{equation}
where $u$ comes from writing $f\circ h=u\prod_{i\in I} y_{i}^{N_{i}}$ as in (\ref{fhuK}) on an open subset $V\subseteq Y$ with $W=E_I\cap V$.
We simply say that the power condition holds for $(f,h,Z)$ if there exists  $(I,W,g,d)$ witnessing the power condition for $(f,h,Z)$.
\end{defn}

\subsection{}

Before giving the proof of Theorem \ref{thm_bound}, we make a few preliminary remarks, using the notation in \S\ref{subsec:bound}.

\begin{remark}\label{rmk2_thm}
In order to prove the inequality  (\ref{eq_thm}) in Theorem \ref{thm_bound}, it is enough to consider the case when $I$ has only one element. Indeed, given an arbitrary subset $I$
as in the theorem, let $Z$ be a connected component of $E_I$ that meets $X_0$, and let $h'\colon Y'\to Y$ be the blow-up of $Y$ along $Z$,
with exceptional divisor $F$. Note that in this case we have
$$N={\rm ord}_F(f)=\sum_{i=1}^mN_i\quad\text{and}\quad \nu={\rm ord}_F(K_{Y'/X})+1=\sum_{i=1}^m\nu_i.$$
Consider the chart $X'_0\subseteq {h'}^{-1}(X_0)$ on $Y'$ with coordinates $y_1,\ldots,y_n$ such that $x_1=y_1$, $x_i=y_1y_i$ for $2\leq i\leq m$
and $x_i=y_i$ for $i>m$. In this case, we have
$$f\circ h\circ h'\vert_{X'_0}=y_1^N\cdot \prod_{i=2}^my_i^{N_i}\cdot (u\circ h').$$
If $g'=g\circ h'\in\cO(X'_0\cap F)$, then
$u\circ h'\vert_{X'_0\cap F}={g'}^d$ and clearly $d$ divides $N$. If we know (\ref{eq_thm}) in the case of one divisor, we obtain
$$\lct (f)\leq\frac{1}{d}+\sum_{i=1}^m\nu_i-\lct (f)\cdot\sum_{i=1}^mN_i=\frac{1}{d}+\sum_{i=1}^mN_i\left(\frac{\nu_i}{N_i}-\lct (f)\right),$$
hence (\ref{eq_thm}) holds.

From now on, we will thus assume that $I$ contains only one element, corresponding to the divisor $E$ on $Y$,
and denote by $N$ and $\nu$ the corresponding invariants. Note that in this case,
the inequality in (\ref{eq_thm}) is equivalent to
\begin{equation}\label{eq2_thm}
\lct (f)\leq\frac{d\nu+1}{d(N+1)}.
\end{equation}
\end{remark}

\begin{remark}\label{rmk_indep_res}
It is interesting to note that in the case of one divisor $E$, the hypothesis on $f$ is independent of the log resolution $h$ and only depends on the
valuation $v={\rm ord}_E$ corresponding to $E$. Indeed, if $\cO_v$ is the corresponding DVR, with residue field $k_v$, and if we write
$f=\pi^Nu$, where $\pi$ is a uniformizer of $\cO_v$, $N=v(f)$, and $u\in\cO_v$ is invertible, then the condition on $f$ is that the class
$\overline{u}\in k_v^{\times}$ lies in $(k^{\times}_v)^d$ for some $d>1$ with $d\vert N$, where $(k^{\times}_v)^d$ is the set of $d$-th powers in $k_v^\times$.
\end{remark}

\begin{remark}
It is enough to prove (\ref{eq_thm}), since it implies (\ref{eq_thm_Z}) for any $Z$ that satisfies the conditions in the theorem.
Indeed, arguing as in Remark~\ref{rmk2_thm}, we see that it is enough to consider the case when $I$ consists of only one element, corresponding to the divisor $E$,
in which case the condition is that $h(E)\cap Z\neq\emptyset$. If $V$ is an open neighborhood of $Z$ such that
$\lct _Z(f)=\lct (f\vert_V)$, then $E\cap h^{-1}(V)\neq\emptyset$, and we may apply the theorem for the restriction of $h$ over $V$
to obtain our assertion.
\end{remark}

\begin{remark}\label{rmk1_thm}
We may and will assume that $E$ is an exceptional divisor.
Indeed, otherwise we have $\nu=1$ and
$$\lct (f)\leq\frac{1}{N}\leq\frac{d+1}{d(N+1)},$$
where the second inequality follows from the fact that by assumption we have $d\leq N$.
\end{remark}

\begin{remark}\label{rmk2.5_thm}
The inequality (\ref{eq2_thm}) clearly holds if the right-hand side is $\geq 1$. We thus may and will assume that
$\frac{d\nu+1}{dN+d}<1$, hence $\nu<N+\frac{d-1}{d}$. Since both $\nu$ and $N$ are integers, this implies $\nu\leq N$.
\end{remark}

\begin{remark}\label{rmk3_thm}
Furthermore,
we may also assume that there is a rational number $c$, with $0<c<\lct (f)$, such that
\begin{equation}\label{eq_rmk3_thm}
\nu-c\cdot N<1.
\end{equation}
The existence of such $c$ is clear if $\frac{\nu-1}{N}<\lct (f)$. On the other hand, if
$\lct (f)\leq\frac{\nu-1}{N}$, then we are done since it is easy to check that we have
$$\frac{\nu-1}{N}<\frac{d\nu+1}{dN+d}$$
when $\nu\leq N$.

The existence of such $c$ as above is useful since it implies that there is a projective, birational morphism $\pi\colon W\to X$, with $W$ normal,
such that $E$ appears as a prime $\QQ$-Cartier divisor $E_W$ on $W$, and such that $E_W$ is the unique exceptional divisor of $\pi$. This is a well-known consequence
of the Minimal Model Program: note that the pair $(X,cD)$ is klt and we can apply
\cite[Corollary~1.4.3]{BCHM} or \cite[Propositions~3.2 and 4.1]{Blum}.
Furthermore, since $X$ is smooth, hence $\QQ$-factorial,
the exceptional locus of $\pi$ has pure codimension $1$, hence it is equal to $E_W$. Note that while $W$ is not a log resolution of $(X,D)$, the hypothesis in
the theorem is birational with respect to the divisor, hence it also holds for $E_W$ (see Remark~\ref{rmk_indep_res}).
\end{remark}

The assertion in the following lemma is well-known, but we include a proof for the sake of completeness.

\begin{lem}\label{lem_terminal}
If $X$ is a normal, ${\mathbb Q}$-Gorenstein variety, and $Z\subseteq X$ is a codimension 2 irreducible closed subset, such that $X$ is not smooth
at the generic point of $Z$, then there is a projective, birational morphism $\pi\colon \widetilde{X}\to X$, with $\widetilde{X}$ smooth, and a prime divisor $F$ on $\widetilde{X}$ such that
$\pi(F)=Z$ and the coefficient of $F$ in $K_{\widetilde{X}/X}$ is $\leq 0$.
\end{lem}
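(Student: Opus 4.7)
The plan is to localize at the generic point of $Z$, produce a divisorial valuation of non-positive discrepancy centered there by the classical theory of surface singularities, and then realize that valuation as a prime divisor on a smooth birational model of $X$.

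Let $\eta$ be the generic point of $Z$, so that $R := \cO_{X,\eta}$ is a two-dimensional normal $\QQ$-Gorenstein local ring which, by hypothesis, is not regular. I aim to produce a divisorial valuation $v$ of the function field $K(X)$ whose center on $X$ is $Z$ and whose discrepancy $a(v;X)$ is $\leq 0$. To do so, take the minimal resolution $\mu\colon V \to \Spec R$ and write $K_{V} - \mu^{*}K_{\Spec R} = \sum_{i} a_{i} E_{i}$, where the $E_{i}$ are the exceptional prime divisors (at least one exists, since $R$ is not regular). Minimality forces $K_{V}$ to be $\mu$-nef, so $K_{V}\cdot E_{j} \geq 0$ for every $j$, and hence $\big(\sum_{i} a_{i}E_{i}\big)\cdot E_{j} \geq 0$ for every $j$, using that $\mu^{*}K_{\Spec R}$ has zero intersection with each exceptional curve. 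If every $a_{i}$ were strictly positive, we would obtain
\[
\Big(\sum_{i} a_{i}E_{i}\Big)^{2} \;=\; \sum_{j} a_{j}\,\Big(\sum_{i} a_{i}E_{i}\Big)\cdot E_{j} \;\geq\; 0,
\]
contradicting the negative-definiteness of the intersection form on the exceptional curves of a normal surface resolution. Hence some $a_{i_{0}} \leq 0$, and the valuation $v := \ord_{E_{i_{0}}}$ has discrepancy $a_{i_{0}} \leq 0$ and center $\overline{\{\eta\}} = Z$ on $X$.

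To realize $v$ on a smooth model, apply Hironaka to obtain a projective birational morphism $Y_{0} \to X$ on which $v = \ord_{F_{0}}$ for some prime divisor $F_{0}$ of $Y_{0}$; then resolve $Y_{0}$ further to get a projective birational morphism $\pi\colon \widetilde{X}\to X$ with $\widetilde{X}$ smooth, such that the strict transform $F$ of $F_{0}$ is a prime divisor on $\widetilde{X}$ with $\ord_{F}=v$. The coefficient of $F$ in $K_{\widetilde{X}/X}$ is then the intrinsic discrepancy $a(v;X)=a_{i_{0}}\leq 0$. Since $\pi(F)$ is closed and irreducible, contains $\eta$, and has dimension $\leq \dim F < \dim X$, we conclude $\pi(F)=Z$, which is the desired prime divisor.

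The main obstacle is the surface-theoretic input, namely that a non-smooth normal $\QQ$-Gorenstein surface singularity admits a divisor of non-positive discrepancy; this is the only place where one leaves the general framework of higher-dimensional birational geometry and invokes Mumford's negative-definiteness of the intersection form together with the non-existence of $(-1)$-curves on a minimal resolution. Everything else amounts to standard valuation-theoretic bookkeeping via Hironaka.
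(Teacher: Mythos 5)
Your proof is correct, but it follows a different route than the paper. The paper globalizes immediately: it takes a log resolution $\pi\colon\widetilde X\to X$ with exceptional prime divisors $F_1,\ldots,F_r$, lets $Y$ be the union of those $\pi(F_i)$ for which the coefficient in $K_{\widetilde X/X}$ is $\leq 0$, and invokes \cite[Corollary~2.32]{KollarMori} to conclude that $X\smallsetminus Y$ has terminal singularities, hence is smooth in codimension $2$ by \cite[Corollary~5.18]{KollarMori}. Since $X$ is singular at the generic point of the codimension-$2$ set $Z$, this forces $Z\subseteq Y$ and then $Z=\pi(F_i)$ for some $F_i$ with non-positive coefficient. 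You instead localize at the generic point $\eta$ of $Z$ and re-derive the required non-positive discrepancy from first principles: existence and nefness of $K_V$ on the minimal resolution of the two-dimensional local ring $\cO_{X,\eta}$, plus Mumford--Lipman negative-definiteness of the exceptional intersection form. Your route is essentially a self-contained proof of the surface-theoretic fact that sits behind the citation in the paper (``terminal $\Rightarrow$ smooth in codimension $2$''), so it trades a reference for a short intersection-theoretic computation; both are clean, and the paper's version is a bit shorter because the packaging is already standard.

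One phrase worth tightening: the conclusion $\pi(F)=Z$ does not quite follow from ``$\pi(F)$ contains $\eta$ and has dimension $<\dim X$'' (that only gives $\pi(F)\supseteq Z$ and $\pi(F)\neq X$). The correct justification is that $\eta$ \emph{is} the center of $v$ on $X$ --- it is the center on $\Spec\cO_{X,\eta}$, and centers are compatible with localization --- and for $v=\ord_F$ the center on $X$ is the generic point of $\pi(F)$; hence $\pi(F)=\overline{\{\eta\}}=Z$. This is what you clearly intend, and the fix is one line.
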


\begin{proof}
Let $\pi\colon \widetilde{X}\to X$ be a log resolution, with exceptional divisor $F_1+\ldots+F_r$.
It follows from \cite[Corollary~2.32]{KollarMori} that if $Y$ is the union of those $\pi(F_i)$ such that $F_i$ has \change{coefficient $\leq 0$} in
$K_{\widetilde{X}/X}$ and $U=X\smallsetminus Y$, then
$U$ has terminal singularities. In particular, $U$ is smooth in codimension 2 (see
\cite[Corollary~5.18]{KollarMori}), hence our assumption implies that $Z\subseteq Y$. Therefore $Z$ is an irreducible component of $Y$,
hence it is equal to $\pi(F_i)$ for some divisor $F_i$ whose coefficient in $K_{\widetilde{X}/X}$ is $\leq 0$.
\end{proof}

Finally, we will need the following bound for the intersection multiplicity of two curves.

\begin{lem}\label{bound_int_multiplicity}
Let $(R,\frm_R)$ be a local, excellent domain, with $\dim(R)=2$. Suppose that $g\in \frm_R$ is a non-zero element that generates a prime ideal
and $h\in\frm_R$ is such that its image $\overline{h}$ in $A=R/(g)$ is non-zero and can be written as $u^d$, for some $u$ in the fraction field of $A$.
In this case we have
$$\ell_R\big(R/(g,h)\big)\geq d.$$
\end{lem}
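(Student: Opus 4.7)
The plan is to reduce to a statement about the $1$-dimensional local domain $A = R/(g)$ and then pass to its normalization, where the $d$-th power condition translates into a clean divisibility on valuations. The identity $\ell_R(R/(g,h)) = \ell_A(A/\overline{h}A)$ lets us reformulate the lemma as: if $A$ is a Noetherian excellent $1$-dimensional local domain, $\overline{h}$ is a nonzero element of $\frm_A$, and $\overline{h} = u^d$ for some $u$ in the fraction field $K$ of $A$, then $\ell_A(A/\overline{h}A) \geq d$.

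Let $\widetilde{A}$ denote the normalization of $A$ inside $K$. Because $R$ is excellent, so is $A$, hence $\widetilde{A}$ is finite over $A$ and is a semi-local Dedekind domain with finitely many maximal ideals $\frm_1,\dots,\frm_s$, each lying over $\frm_A$. I would first show $u \in \widetilde{A}$: indeed $u$ is a root of the monic polynomial $T^d - \overline{h}$ with coefficients in $\widetilde{A}$, so $u$ is integral over $\widetilde{A}$, hence lies in $\widetilde{A}$ since the latter is integrally closed. Letting $\nu_i$ be the normalized valuation attached to $\frm_i$ and $e_i := \nu_i(\overline{h})$, the relation $\overline{h} = u^d$ together with $u \in \widetilde{A}$ gives $e_i = d\,\nu_i(u) \in d \cdot \ZZ_{\geq 0}$. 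Because $\overline{h} \in \frm_A \subseteq \frm_i$ we have $e_i \geq 1$, and combined with divisibility by $d$ this forces $e_i \geq d$ for every $i$.

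Next I would compute $\ell_A(\widetilde{A}/\overline{h}\widetilde{A})$. Since $\widetilde{A}$ is Dedekind, $\overline{h}\widetilde{A} = \prod_i \frm_i^{e_i}$ and $\widetilde{A}/\overline{h}\widetilde{A} \cong \prod_i \widetilde{A}/\frm_i^{e_i}$. Filtering each factor by the powers of $\frm_i$ gives $\ell_A(\widetilde{A}/\overline{h}\widetilde{A}) = \sum_i e_i\, [\kappa(\frm_i):\kappa(\frm_A)] \geq e_1 \geq d$. The remaining task is to show that this length coincides with the quantity of interest, i.e. $\ell_A(A/\overline{h}A) = \ell_A(\widetilde{A}/\overline{h}\widetilde{A})$. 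For this I would apply the snake lemma to the multiplication-by-$\overline{h}$ endomorphism of the exact sequence $0 \to A \to \widetilde{A} \to \widetilde{A}/A \to 0$. Since $A$ and $\widetilde{A}$ are domains and $\overline{h} \neq 0$, the first two verticals are injective, producing the four-term exact sequence
\begin{equation*}
0 \to \ker\bigl(\overline{h}\cdot : \widetilde{A}/A \to \widetilde{A}/A\bigr) \to A/\overline{h}A \to \widetilde{A}/\overline{h}\widetilde{A} \to (\widetilde{A}/A)/\overline{h}(\widetilde{A}/A) \to 0.
\end{equation*}
The module $\widetilde{A}/A$ has finite length over $A$ (its support is contained in $\{\frm_A\}$ because $A$ and $\widetilde{A}$ share the same fraction field and $\widetilde{A}$ is finite over the $1$-dimensional ring $A$), so multiplication by $\overline{h}$ on $\widetilde{A}/A$ has kernel and cokernel of equal length. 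Taking $A$-lengths in the exact sequence then yields the desired equality.

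I expect the main technical point to be verifying the equality $\ell_A(A/\overline{h}A) = \ell_A(\widetilde{A}/\overline{h}\widetilde{A})$ via the snake lemma, specifically the finite-length hypothesis on $\widetilde{A}/A$; once that is in place, the divisibility $d \mid e_i$ provided by the power condition is what actually delivers the numerical bound, and every other step is formal.
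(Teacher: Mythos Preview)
Your proof is correct and follows the same overall strategy as the paper: reduce to the one-dimensional domain $A=R/(g)$, pass to its integral closure $B=\widetilde{A}$ (finite over $A$ by excellence), establish $\ell_A(A/\overline{h}A)=\ell_A(B/\overline{h}B)$, and then exploit the $d^{\rm th}$-power structure in $B$. The technical implementations differ slightly. For the length equality, the paper invokes Samuel multiplicity: since $\operatorname{rank}_A(B)=1$, one has $e_A\big((\overline{h}),B\big)=e_A\big((\overline{h}),A\big)$ by \cite[Theorem~14.8]{Matsumura}, and each multiplicity equals the corresponding colength because $\overline{h}$ is a non-zero-divisor in a one-dimensional ring \cite[Theorem~14.11]{Matsumura}; your snake-lemma argument with the finite-length module $\widetilde{A}/A$ achieves the same thing without multiplicity theory. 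For the final bound, the paper observes directly that $\ell_A(B/\overline{h}B)=d\cdot\ell_A(B/uB)\geq d$ via the filtration $B\supset uB\supset\cdots\supset u^dB=\overline{h}B$, which is a bit quicker than your valuation computation but entirely equivalent. Either route is standard; yours is marginally more self-contained.
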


\begin{proof}
Let $B$ be the integral closure of $A$ in its fraction field. Since $R$ is excellent, $B$ is a finitely generated $A$-module. For a finitely generated $A$-module $M$
and an ideal $\frq$ in $A$, with radical equal to the maximal ideal, we write $e_A(\frq,M)$ for the Samuel multiplicity of $M$ with respect to $\frq$.
Since ${\rm rank}_A(B)=1$, it follows from \cite[Theorem~14.8]{Matsumura} that
$$e_A\big((\overline{h}),B\big)=e_A\big((\overline{h}),A\big).$$
Note also that we have
$$e_A\big((\overline{h}),A\big)=\ell_A\big(A/(\overline{h})\big)=\ell_R\big(R/(g,h)\big),$$
where the first equality follows from the fact that $\overline{h}$ is a non-zero-divisor in the 1-dimensional local ring $A$ (see \cite[Theorem~14.11]{Matsumura}).
Using the fact that $\overline{h}$ and $u$ are non-zero-divisors in $B$, we also have
$$e_A\big((\overline{h}),B\big)=\ell_A(B/\overline{h}B)=d\cdot \ell_A(B/uB)\geq d.$$
This completes the proof of the lemma.
\end{proof}

We can now give the proof of the bound for the log canonical threshold.

\begin{proof}[Proof of Theorem~\ref{thm_bound}]
We may and will assume that we are in the situation described in Remark~\ref{rmk3_thm}, with a morphism $\pi\colon W\to X$ whose
exceptional locus is equal to $E_W$, the prime divisor on $W$ corresponding to $E$. By hypothesis, $h$ is an isomorphism
over $X\smallsetminus D$, hence
$\pi(E_W)\subseteq D$. For every $y\in\pi(E_W)$, the fiber $\pi^{-1}(y)$ is contained in the exceptional locus. Since $\pi$ is proper, it follows that
$\pi^{-1}(y)\cap\widetilde{D}\neq\emptyset$, where $\widetilde{D}$ is the strict transform of $D$
on $W$. In particular,
$E_W\cap\widetilde{D}$ is non-empty.
Note that we have $\pi^*(D)=\widetilde{D}+NE_W$ and $E_W$ is ${\mathbb Q}$-Cartier, hence the divisor $\widetilde{D}$ is $\QQ$-Cartier. Similarly, since $K_{W/X}=(\nu-1)E_W$,
it follows that $K_{W/X}$ is $\QQ$-Cartier.

Let $c=\lct (f)$, so that $(X,c D)$ is log canonical.
 This implies that also the pair
 $$(W,c\cdot \pi^*(D)-K_{W/X})=(W,c_1\widetilde{D}+c_2E_W)$$ is log canonical,
 where $c_1=c$ and $c_2=cN-\nu+1$.
Arguing by contradiction, we may assume that $c>\frac{d\nu+1}{dN+d}$.
In this case we have
$$c_1+c_2=c+(cN-\nu+1)=c(N+1)-\nu+1>\frac{d\nu+1}{d}-\nu+1=1+\frac{1}{d}.$$
In particular, this gives $c_2>\frac{1}{d}>0$.

We now consider a suitable cyclic cover.
Let $s$ be a positive integer such that $sE_W$ is Cartier
and choose an open subset $V$ of $W$ meeting $\widetilde{D}\cap E_W$ such that
we have an isomorphism $\cO_V(sE_V)\simeq\cO_V$, where $E_V=E_W\vert_V$.
After possibly replacing $s$ by a divisor and $V$ by a smaller open subset, we may assume that
$s'E_V$ is not Cartier for any divisor $s'$ of $s$ different from $s$.
 Consider the $\cO_V$-algebra
$${\mathcal A}=\cO_V\oplus\cO_V(E_V)\oplus\ldots\oplus\cO_V\big((s-1)E_V\big),$$
where multiplication is defined using the fact that for $0\leq i,j\leq s-1$ with $i+j\geq s$, we have
$$\cO_V(i E_V)\otimes\cO_V(j E_V)\to \cO_V\big((i+j)E_V\big)\simeq \cO\big((i+j-s)E_V\big).$$
Note that we have a finite surjective morphism $\phi\colon U={\mathcal Spec}({\mathcal A})\to V$.
It is well-known and straightforward to check that $U$ is normal and $\phi$ is \'{e}tale in codimension $1$; in particular,
we have $K_{U/V}=0$.
Moreover, the section $1$ of $\cO_V(E_V)$ defines an effective Cartier divisor $E_U$ on $U$ such that
$\phi^*(E_V)=E_U$.
We also put $D_U=\phi^*(\widetilde{D})$. Note that since $\widetilde{D}+NE_W$ is Cartier and $E_U$ is Cartier,
it follows that also $D_U$ is Cartier, as well. Furthermore, since $\widetilde{D}\cap E_V\neq\emptyset$, we conclude that $D_U\cap E_U\neq\emptyset$.
Let $Z$ be an irreducible component of $D_U\cap E_U$, so that $Z$ has codimension $2$ in $U$.

We first show that $U$ is smooth at the generic point of $Z$. Indeed,
since $Z$ has codimension $2$ in $X$,
if $U$ is not smooth at the generic point of $Z$,
it follows from Lemma~\ref{lem_terminal} that
there is a prime divisor $F$ on some smooth variety $\widetilde{U}$, with a birational morphism $\widetilde{U}\to U$, such that $F$ dominates $Z$ and
${\rm ord}_F(K_{\widetilde{U}/U})\leq 0$. Since $K_{U/V}=0$ and the pair
$(W,c_1\widetilde{D}+c_2E_W)$ is log canonical, it follows
that the pair $(U,c_1D_U+c_2E_U)$ is log canonical, and thus
$$1\geq 1+{\rm ord}_F(K_{\widetilde{U}/U})\geq c_1\cdot {\rm ord}_F(D_U)+c_2\cdot {\rm ord}_F(E_U)\geq c_1+c_2>1+\frac{1}{d},$$
a contradiction. Therefore $U$ is smooth at the generic point of $Z$.

Note that since $\phi$ is \'{e}tale in codimension $1$, the divisor $E_U$ is reduced. Let $E_U^1,\ldots,E_U^r$ be the prime
divisors containing $Z$ that appear in $E_U$. If $R=\cO_{U,Z}$, then the image of $f$ in $R$ factors as
$h\cdot\prod_{i=1}^rg_i^N$, where
$h\in R$ a local equation of $D_U$, and $g_1,\ldots,g_r\in R$
are local equations of $E_U^1,\ldots,E_U^r$.
By considering the local homomorphism $\cO_{V,E_V}\hookrightarrow\cO_{U,E_U^1}$, we deduce from the hypothesis in the theorem
that the class of $h\cdot\prod_{i=2}^r g_i^N$ in $k(E_U^1)$ is the $d^{\rm th}$ power of some element of $k(E_U^1)$; therefore the same holds for
the class of $h$ in $k(E_U^1)$. We may thus apply
 Lemma~\ref{bound_int_multiplicity}  to conclude that
\begin{equation}\label{eq_bound_length}
\ell_R\big(R/(g_1,h)\big)\geq d.
\end{equation}

On the other hand,
since $U$ is smooth at the generic point of $Z$, we have the divisorial valuation ${\rm ord}_Z$ of the function field of $U$
\change{(this corresponds to the exceptional divisor on the blow-up along $Z$ of a smooth open subset of $U$ meeting Z)}.
Since the pair $(U, c_1D_U+c_2E_U)$ is log canonical, we have
$$2\geq c_1\cdot {\rm ord}_Z(D_U)+c_2\cdot {\rm ord}_Z(E_U).$$
Since $c_1+c_2>1$, we conclude that either ${\rm ord}_Z(E_U)=1$ or ${\rm ord}_Z(D_U)=1$.
We treat these two cases separately.

\noindent {\bf Case 1}.  Suppose that ${\rm ord}_Z(E_U)=1$, that is, $E_U$ is smooth at the generic point of $Z$. In particular, we have $r=1$.
In this case there is a regular system of parameters of $R$ given by $g_1$ and some $x\in R$. Let $v$ be the monomial valuation (with respect to this
coordinate system) of the fraction field of $R$, such that $v(g_1)=d$ and $v(x)=1$. Condition (\ref{eq_bound_length}) implies that $h\in (g_1, x^d)$, hence $v(h)\geq d$.
It is a standard fact that the log discrepancy of $v$ is $d+1$, hence the fact that $(U,c_1D_U+c_2E_U)$ is log canonical implies
$$d+1\geq c\cdot v(h)+(cN-\nu+1)\cdot v(g_1)\geq d(c+cN-\nu+1).$$
A straightforward computation then gives
$$c\leq\frac{d\nu+1}{dN+d},$$
completing the proof of this case.

\noindent {\bf Case 2}. If ${\rm ord}_Z(D_U)=1$, we proceed similarly. Consider a regular system of parameters of $R$ given by $h$ and $y$
and consider the monomial valuation $w$ (in this system of coordinates) of the fraction field of $R$ such that $w(h)=d$ and $w(y)=1$.
It follows from (\ref{eq_bound_length}) that $g_1\in (h,y^d)$, hence $w(g_1)\geq d$. Since the log discrepancy of $w$ is $d+1$, using the fact that
the pair $(U,c_1D_U+c_2 E_U)$ is log canonical, we obtain
$$d+1\geq c\cdot w(h)+(cN-\nu+1)\cdot w(g_1)\geq dc+d(cN-\nu+1),$$
which again implies
$c\leq\frac{d\nu+1}{dN+d}$,
completing the proof of the theorem.
\end{proof}

\subsection{}

For our applications we will use the following corollary of Theorem \ref{thm_bound}, with notation as above and with $X=\AA^n_\CC$.
\begin{cor}
\label{cor:num}
Let $f$ be a non-constant polynomial in $\CC[x]$, $h\colon Y\to \AA_{\CC}^n$ a log resolution of the pair $(\AA^n_\CC,D)$ with $D$ given by $f$
and let $Z$ be a closed subvariety of $\AA_{\CC}^n$ such that $f$ vanishes on $Z(\CC)$. 
If the power condition holds for $(f,h,Z)$, witnessed by some $(I,W,g,d)$,
then the following inequality holds
\begin{equation}\label{eq:Sub:weak:conj}
\lct_Z(f)\leq\dfrac{1}{d} + \sum_{i\in I}\big(\nu_{i}-N_i\cdot \lct_Z(f)\big).
\end{equation}
\end{cor}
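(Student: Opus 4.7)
The plan is to deduce Corollary \ref{cor:num} as a direct application of Theorem \ref{thm_bound}: the inequality in the corollary is literally the conclusion (\ref{eq_thm_Z}) after distributing $N_i$ inside the sum, so the whole task reduces to translating the data of the power condition from Definition \ref{defn:pow} into the local hypothesis used in the theorem.

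First I would unpack the witnessing tuple $(I,W,g,d)$: Definition \ref{defn:pow} provides an open subset $V\subseteq Y$ on which the factorization $f\circ h=u\prod_{i\in I}y_i^{N_i}$ from (\ref{fhuK}) holds, with $W=E_I\cap V$ contained in a single irreducible component of $E_I$, with $d\mid N_i$ for all $i\in I$, $u|_W=g^d$, and $h(W)\subseteq Z$. Shrinking $V$ if necessary, I may assume that the local equations $y_i$ extend to an algebraic system of coordinates on $V$ as required in the setup preceding Theorem \ref{thm_bound}.

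Next I would set $X_0:=V\smallsetminus\bigcup_{j\notin I}E_j$. This is an open subset of $Y$ on which the same factorization of $f\circ h$ holds, and
$$X_0\cap E_I^\circ\;=\;W\smallsetminus\bigcup_{j\notin I}(W\cap E_j)$$
is a non-empty open subset of $W$, since each $W\cap E_j$ for $j\notin I$ is a proper closed subset of the irreducible variety $W$ by the simple normal crossings assumption. In particular $u|_{X_0\cap E_I^\circ}$ equals the restriction of $g^d$, so the hypothesis of Theorem \ref{thm_bound} is satisfied for the same $I$, the integer $d$, and this $X_0$.

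Finally, the chain of inclusions $h(X_0\cap E_I^\circ)\subseteq h(W)\subseteq Z$, combined with the closedness of $Z$ and the non-emptiness of $X_0\cap E_I^\circ$, yields $\overline{h(X_0\cap E_I^\circ)}\cap Z\neq\emptyset$. Inequality (\ref{eq_thm_Z}) of Theorem \ref{thm_bound} then gives exactly (\ref{eq:Sub:weak:conj}). There is no real obstacle here: the substantive work has already been done in Theorem \ref{thm_bound}, and what remains is bookkeeping that converts the intrinsic power condition of Definition \ref{defn:pow} (stated in terms of an open subset of an irreducible component of $E_I$) into the open-chart form used to state Theorem \ref{thm_bound}.
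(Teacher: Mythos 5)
Your proposal is correct and matches the intended derivation: the paper states Corollary~\ref{cor:num} as an immediate consequence of Theorem~\ref{thm_bound} without spelling out the bookkeeping, and what you have written is exactly that translation. You correctly set $X_0=V\smallsetminus\bigcup_{j\notin I}E_j$ so that $X_0\cap E_I^\circ=W\smallsetminus\bigcup_{j\notin I}(W\cap E_j)$ is a non-empty open subset of the irreducible $W$ on which the power-condition identity $u=g^d$ restricts, verify $\overline{h(X_0\cap E_I^\circ)}\cap Z\neq\emptyset$ from $h(W)\subseteq Z$, and observe that $(\ref{eq_thm_Z})$ rewrites as $(\ref{eq:Sub:weak:conj})$ after distributing $N_i$.
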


Note that the weaker inequality
\begin{equation}\label{eq_thm_Z_1/2}
\lct _Z(f)  \leq \frac{1}{2} + \sum_{i\in I}\left(\nu_i- N_i\cdot \lct _Z(f)\right),
\end{equation}
with 1/2 instead of the term $1/d$ in  (\ref{eq:Sub:weak:conj}), would already suffice to prove our results on exponential sums, but not for our application on poles with largest possible multiplicity in section \ref{subsec:poles}.

\section{Igusa's local zeta function and exponential sums}\label{igusa}

\subsection{}\label{sec:Lchi}
Let $f$, $Z$ and $\cO$ be as in Theorem \ref{thm:sum}.
Consider a local field $L$ over $\cO$ and let $\chi\colon \cO_{L}^{\times}\rightarrow\CC^{\times}$ be a multiplicative character, that is, a continuous group homomorphism on the group of units, $\cO_{L}^{\times}$, of $\cO_{L}$. Note that any such $\chi$ has finite image. The \emph{order} of $\chi$ is the number of elements in its image. The \emph{conductor} $c(\chi)$ of $\chi$ is the smallest $c\geq 1$ for which $\chi$ is trivial on $1+\cM_L^c$, with $\cM_L$ the maximal ideal of $\cO_L$. We put $\chi(0)=0$. Let $s$ be a complex number with real part at least $0$. With a fixed uniformizer $\varpi_L$ of $\cO_L$, we consider the map $\acc\colon L\to \cO_L$ that sends a nonzero $x$ to $x\varpi_L^{-\ord x}$ and $0$ to $0$. Further, write $\ac(x)$ in $k_L$ for the reduction of $\acc(x)$ modulo $\cM_L$.
We now associate to this data Igusa's \emph{local  zeta function}
\begin{equation}\label{eq:ZchiPhi}
\cZ^Z_{f,L,\chi,s}:=\int_{{\{x\in \cO_{L}^n\mid \overline x\in Z(k_L) \} }}\chi\big(\acc(f(x))\big)|f(x)|^{s}|dx|.
\end{equation}
Igusa showed in \cite{Igusa1} that $\cZ^Z_{f,L,\chi,s}$ is a rational function in $t=q_L^{-s}$ when $L$ has characteristic zero (and, when $L$ has positive, large enough characteristic, given $f$), thus starting the study of a now vast subject.

We begin by recalling a result relating exponential sums to Igusa's local zeta functions (see \cite[Proposition 1.4.4]{DenefBour}).

\begin{prop}\label{004}
Let $f$, $\cO$, and $Z$ be as in Theorem \ref{thm:sum}, $L$ a local field over $\cO$, and $\psi$ a nontrivial additive character on $L$.
If we put $m=m_\psi$, $q=q_L$, and $t=q^{-s}$, then
$E^Z_{f,L,\psi}$ is equal to
\begin{equation}\label{eq:1.4.4-Bour}
 \cZ^Z_{f,L,1,0} + \mbox{\textnormal{Coeff}}_{t^{m-1}}\Big(\dfrac{(t-q)\cZ^Z_{f,L,1,s} }{(q-1)(1-t)}\Big)
+\sum_{\chi\neq 1 }g_{\chi^{-1},\psi}\mbox{\textnormal{Coeff}}_{t^{m-c(\chi)}}\big(\cZ^Z_{f,L,\chi,s} \big),
\end{equation}
where $1$ stands for the trivial character on $\cO_L^\times$,  the summation index $\chi$ runs over all nontrivial multiplicative characters on $\cO_L^\times$,
$g_{\chi,\psi}$ is a complex number depending only on $\chi$ and $\psi$, and where $\mbox{\textnormal{Coeff}}_{t^{\ell}}S(t)$ for any $\ell\geq 0$ and any power series $S$ in $t$ stands for the coefficient of $t^\ell$. Moreover,  if $c(\chi)=1$, then
\begin{equation}\label{eq:|gchi|}
|g_{\chi,\psi}| = \frac{q^{1/2}}{q-1}.
\end{equation}
\end{prop}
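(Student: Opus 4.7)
The plan is to prove Proposition \ref{004} by a double decomposition: first stratify the integral defining $E^Z_{f,L,\psi}$ by the valuation $e=\ord f(x)$, and then perform Fourier analysis on the compact group $\cO_L^\times$. Let $\Phi$ denote the characteristic function of $\{x\in\cO_L^n:\overline{x}\in Z(k_L)\}$. Since $\{f=0\}$ has measure zero, we may write
$$E^Z_{f,L,\psi}=\sum_{e=0}^\infty \int_{\{x:\ \Phi(x)=1,\ \ord f(x)=e\}} \psi\bigl(\varpi_L^e\acc(f(x))\bigr)\,|dx|,$$
so everything reduces to understanding the function $u\mapsto \psi(\varpi_L^e u)$ on the units.

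For $e\geq m:=m_\psi$ this function is identically $1$, contributing the tail $\sum_{e\geq m} a_e$ with $a_e:=\mathrm{meas}\{\Phi=1,\ \ord f=e\}$. For $0\leq e<m$ it is invariant under $1+\varpi_L^{m-e}\cO_L$, hence factors through the finite group $(\cO_L/\varpi_L^{m-e})^\times$ and expands as a finite Fourier series over characters of $\cO_L^\times$ of conductor $\leq m-e$. The pivotal calculation is that the Fourier coefficient of every character of conductor \emph{strictly} less than $m-e$ vanishes, except that of the trivial character in the single case $e=m-1$. Writing $u=u_0\bigl(1+\varpi_L^{c(\chi)}v\bigr)$ and using that $\chi$ is trivial on $1+\varpi_L^{c(\chi)}\cO_L$ while $\psi$ remains nontrivial on $\varpi_L^{e+c(\chi)}\cO_L$ whenever $c(\chi)<m-e$, the inner sum over $v$ annihilates. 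Consequently only characters of conductor exactly $m-e$ contribute, and these are precisely the characters weighted by $g_{\chi^{-1},\psi}$ in the claimed formula.

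Substituting the Fourier expansion back into each stratum integral rewrites it as a linear combination of $\int_{\ord f=e}\chi(\acc f(x))\Phi(x)\,|dx|$, which is precisely $\mathrm{Coeff}_{t^e}(\cZ^Z_{f,L,\chi,s})$. Aggregating the contributions of nontrivial characters yields the third summand of \eqref{eq:1.4.4-Bour}. The trivial-character contribution, collected from the tail $e\geq m$ together with the lone $e=m-1$ term, equals $\cZ^Z_{f,L,1,0}$ plus a correction; a short manipulation using the expansion $\frac{t-q}{(q-1)(1-t)}=-\frac{1}{q-1}-\sum_{k\geq 0}t^k$ rearranges this correction into exactly $\mathrm{Coeff}_{t^{m-1}}\bigl(\frac{(t-q)\cZ^Z_{f,L,1,s}}{(q-1)(1-t)}\bigr)$. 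Finally, the identity \eqref{eq:|gchi|} for $c(\chi)=1$ reduces to the classical magnitude $q^{1/2}$ of a nontrivial Gauss sum over $k_L^\times$, divided by the normalization $|k_L^\times|=q-1$.

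The main obstacle I anticipate is purely bookkeeping: reconciling the normalization of the finite Fourier inversion with the definition of $g_{\chi,\psi}$ implicit in the statement, and tracking the trivial-character piece through the geometric-series rearrangement. No step is deep; the only genuinely non-routine content is the vanishing of low-conductor Fourier coefficients, and it is precisely this vanishing that makes the statement as clean as it is.
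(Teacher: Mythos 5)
Your derivation is correct, and it is exactly the standard argument behind the cited result: the paper itself does not prove Proposition \ref{004} but simply recalls it from Denef's Bourbaki report \cite[Proposition 1.4.4]{DenefBour}, where the proof proceeds by precisely this stratification of the integral along $\ord f = e$ followed by finite Fourier expansion of $u\mapsto\psi(\varpi_L^e u)$ on $(\cO_L/\varpi_L^{m-e})^\times$ and the vanishing-of-low-conductor-coefficients observation. Your bookkeeping is consistent: with $a_e=\mathrm{Coeff}_{t^e}\cZ^Z_{f,L,1,s}$, the tail $\sum_{e\geq m}a_e$ together with the trivial-character correction $-\frac{1}{q-1}a_{m-1}$ recombine, via $\frac{t-q}{(q-1)(1-t)}=-\frac{1}{q-1}-\sum_{k\geq 0}t^k$, into $\cZ^Z_{f,L,1,0}+\mathrm{Coeff}_{t^{m-1}}\bigl(\frac{(t-q)\cZ^Z_{f,L,1,s}}{(q-1)(1-t)}\bigr)$ exactly as claimed, and the $c(\chi)=1$ coefficient is $\frac{1}{q-1}$ times a classical Gauss sum of modulus $q^{1/2}$, giving \eqref{eq:|gchi|}.
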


For an explicit description of the $g_{\chi,\psi}$ in (\ref{eq:1.4.4-Bour}),
see \cite[Proposition~1.4.4]{DenefBour}, whose proof applies to local fields of any characteristic.

We recall a variant of the Lang-Weil estimates and a corollary. 
\begin{prop}[Lang-Weil estimates]\label{Lang-Weil}
Let $k=\FF_q$ be a finite field and $X\subseteq \PP_k^n$ be a closed subvariety of dimension $r$. If $X$ is geometrically irreducible, then
there is a positive constant $c_X$ such that for every $\ell\geq 1$ we have
\[
|\#X(\FF_{q^\ell})-q^{\ell r}\mid \leq
c_X q^{\ell (r-\frac{1}{2})}.
\]
Moreover, $c_X$ can be taken independently from $X$ and from $q$ as long as $n$, $r$, and the number and degrees of the equations defining $X$ remain bounded.
\end{prop}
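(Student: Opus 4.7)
The plan is to follow the classical Lang--Weil strategy: reduce to the case of curves by iterated generic hyperplane sections and then invoke Weil's Riemann hypothesis for curves, tracking all constants carefully to obtain the required uniformity. A preliminary observation is that the projective degree $d := \deg(X)$ is bounded in terms of $n$, $r$, and the number and degrees of the defining equations (by B\'ezout applied to a generic codimension-$r$ linear section, or by Hilbert-series considerations), so it suffices to prove the estimate with a constant $c_X$ depending only on $n$, $r$, $d$.

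For the main reduction, I would induct on $r$. A Bertini-type argument over $\FF_q$ shows that within any sufficiently large linear family of hyperplanes $H \subset \PP_k^n$, a positive fraction cut $X$ in a geometrically irreducible variety of dimension $r-1$ and degree $\leq d$. Sweeping $X$ by such a family and subtracting the base-locus contribution expresses $\#X(\FF_{q^\ell})$ in terms of point counts on lower-dimensional geometrically irreducible varieties of bounded degree in $\PP^n$, plus an error term controlled purely by $n$ and $d$. Iterating, one reduces to the case of a geometrically irreducible curve $C \subset \PP^n$ of degree at most $d$.

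For such a curve $C$, I would pass to the normalization $\widetilde{C}\to C$ and apply Weil's estimate
\[
\big|\#\widetilde{C}(\FF_{q^\ell}) - q^\ell - 1\big| \;\leq\; 2g\, q^{\ell/2},
\]
where the genus $g$ of $\widetilde{C}$ is bounded in terms of $d$ by the arithmetic genus--degree inequality. The discrepancy between $\#C(\FF_{q^\ell})$ and $\#\widetilde{C}(\FF_{q^\ell})$ is bounded by the number of singular points of $C$, which is again controlled in terms of $n$ and $d$. Combining this with the inductive reduction yields the required bound, with $c_X$ depending only on $n$, $r$, $d$, and hence uniformly on the discrete data in the hypothesis of the proposition.

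The main obstacle will be executing the Bertini reduction uniformly in $q$: over a finite field one cannot pick a truly generic hyperplane, so one must verify that sufficiently many hyperplanes give a geometrically irreducible section of the expected degree, and that the error contribution from the remaining ``bad'' hyperplanes and from the base locus depends only on $n$, $r$, $d$ and not on $q$ itself. Once this uniformity is established, the uniformity clause of the proposition follows immediately from the bound on $d$ in terms of the ambient discrete data.
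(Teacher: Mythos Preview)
Your outline is the classical Lang--Weil argument and is sound: the original paper of Lang and Weil already gives the constant in the form $(d-1)(d-2)q^{\ell(r-1/2)}+A(n,r,d)q^{\ell(r-1)}$, so once you bound the projective degree $d$ in terms of the number and degrees of the defining equations (as you do), the uniformity clause follows. The Bertini step you flag as the main obstacle is exactly what Lang and Weil handle by a counting argument over the family of hyperplanes through a fixed rational point, showing that the proportion of ``bad'' hyperplanes is bounded in terms of $n$, $r$, $d$ alone; so this is a genuine but known difficulty, not a gap.

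The paper takes a different, shorter route: it simply cites the classical Lang--Weil theorem for the existence of $c_X$, and for the uniformity in $X$ and $q$ it invokes Katz's effective bounds on sums of Betti numbers (Theorem~12 of \emph{Sums of Betti numbers in arbitrary characteristic}), which via the Grothendieck--Lefschetz trace formula bound $c_X$ explicitly in terms of the complexity data. Your approach is more elementary and self-contained, staying entirely within the curve case and Weil's estimate; the paper's approach is a two-line appeal to the \'etale-cohomological machinery, which has the advantage of giving explicit constants without redoing the inductive hyperplane argument.
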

\begin{proof}
The existence of $c_X$ comes from the usual Lang-Weil estimates. The independence of $c_X$ from $X$ and from $q$ (as long as the complexity of $X$ stays bounded), follows from \cite[Theorem~12]{Katz-Bet}, which gives furthermore explicit upper bounds for $c_X$ in terms of the complexity of $X$ (see also \cite[Theorem~3.1]{Rojas-Leon}).
\end{proof}

\begin{cor}\label{sums1/2}\label{nontrivial}
Let  $\cO$ be a ring of integers and let $d>1$ be an integer. Let $X\subseteq \AA^n_\cO$ be a closed subscheme such that $X_\CC$ is an irreducible closed subvariety of $\AA^n_\CC$ of dimension $r$, and let $F\colon X\to \AA^1_\cO$ be a regular morphism \change{such that $F$ is nonvanishing on $X(\CC)$}. Suppose that there does not exist $e>1$ dividing $d$ and a regular morphism $g\colon V\subset X_\CC \to\AA_\CC^1$ on a non-empty open $V$ of $X_\CC$ such that $g^e$ equals $F\vert_{V}$.
Then there exist constants $c$ and $M$ such that for all finite fields $\FF_q$ of characteristic at least $M$ with $\FF_q$ an algebra over $\cO$, and for any character $\chi$ of $\FF_q^\times$ of order $d$, we have
\begin{equation}\label{eq:chi1/2}
 | \sum_{x\in X(\FF_q)} \chi \big(F(x)\big) | \leq c q^{r-1/2}.
 \end{equation}
Moreover, $c$ can be taken independently from $X$ and $F$ as long as $n$, $r$, \change{$d$,} and the number and degrees of the equations defining $X$ and $F$ remain bounded.
\end{cor}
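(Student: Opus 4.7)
The plan is to interpret the character sum cohomologically. After spreading out (enlarging $M$), we may assume $\FF_q$ has characteristic coprime to $d$, that $X_{\FF_q}$ remains geometrically irreducible of dimension $r$, that $F$ is nowhere zero on $X(\overline{\FF_q})$, and that $F$ is still not an $e$-th power in $\FF_q(X)$ for any $e > 1$ dividing $d$. Moreover, the existence of a character $\chi$ of $\FF_q^\times$ of exact order $d$ forces $d \mid q - 1$. To such $\chi$ I would attach the Kummer rank-one $\overline{\QQ_\ell}$-sheaf $\mathcal{L}_\chi$ on $\GG_m$, and set $\mathcal{F} := F^{*}\mathcal{L}_\chi$, which is lisse of rank one on $X_{\FF_q}$. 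By Kummer theory, the non-power hypothesis on $F$ translates exactly to $\mathcal{F}$ being geometrically nontrivial, equivalently, to the Kummer cover $Y\colon y^{d}=F(x)$ over $X_{\FF_q}$ being geometrically irreducible.

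By Grothendieck's trace formula,
\begin{equation*}
\sum_{x \in X(\FF_q)} \chi(F(x)) \;=\; \sum_{i=0}^{2r} (-1)^i \operatorname{Tr}\big(\operatorname{Frob}_q \mid H^{i}_{c}(X_{\overline{\FF_q}}, \mathcal{F})\big).
\end{equation*}
Geometric nontriviality of the rank-one sheaf $\mathcal{F}$ forces $H^{2r}_{c}(X_{\overline{\FF_q}}, \mathcal{F}) = 0$. By Deligne's Weil II, each $H^{i}_{c}(X_{\overline{\FF_q}}, \mathcal{F})$ is mixed of weight $\leq i$, so Frobenius eigenvalues on it have absolute value at most $q^{i/2}$. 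Combined with the vanishing of the top cohomology, this yields
\begin{equation*}
\Big| \sum_{x \in X(\FF_q)} \chi(F(x)) \Big| \;\leq\; \Big(\sum_{i} \dim H^{i}_{c}(X_{\overline{\FF_q}}, \mathcal{F}) \Big) \cdot q^{\,r - 1/2}.
\end{equation*}

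The main remaining issue is the uniformity of $c$ in $q$ and $\chi$, which reduces to a uniform bound on the sum of Betti numbers of $\mathcal{F}$ in terms of the combinatorial complexity of $(X, F, d)$; this is exactly the content of Katz's effective bounds from \cite{Katz-Bet} that already underlie Proposition \ref{Lang-Weil}. The hardest part is thus twofold: first, to make the spreading-out precise, in particular verifying that the non-power hypothesis for $F$ persists modulo all but finitely many primes and that $\mathcal{F}$ is tame so Weil II applies cleanly; second, to ensure that the complexity of the Kummer pullback $\mathcal{F}$ remains controlled by the combinatorial data of $X$, $F$, and $d$, so that Katz's uniform estimate yields $c$ independent of $q$ and $\chi$. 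An alternative, more elementary route, avoiding Deligne's machinery, would be to induct on $r$: slice $X$ with a generic hyperplane so that Bertini guarantees both irreducibility and preservation of the non-power hypothesis on the fibers, apply the induction hypothesis to each good fiber, sum, and estimate the finitely many bad fibers trivially; the base case $r = 1$ is then handled by the classical Weil bound for character sums on curves in the effective form given by \cite[Theorem 1]{Cochrane}.
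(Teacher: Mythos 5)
Your proposal is correct but takes a genuinely different route from the paper. You prove the estimate cohomologically: you pull back the Kummer sheaf $\mathcal{L}_\chi$ along $F$ to get a lisse rank-one sheaf $\mathcal F$, translate the non-power hypothesis into geometric nontriviality of $\mathcal F$, kill $H^{2r}_c$ on those grounds, and then invoke Grothendieck's trace formula together with Deligne's Weil~II, with uniformity in $c$ coming from Katz's effective Betti-number bounds in~\cite{Katz-Bet}. The paper instead argues entirely by point counting: it forms, for each $\lambda\in\FF_q^\times$, the Kummer twist $U_\lambda\colon F(x)=\lambda y^d$, observes via spreading out that each $U_\lambda$ is geometrically irreducible (this is where the non-power hypothesis enters), applies the uniform Lang--Weil estimate of Proposition~\ref{Lang-Weil} to get $\bigl|\#\{x\in X(\FF_q)\mid F(x)/\lambda\in\FF_q^{\times,d}\}-q^r/d_q\bigr|\le (c/d_q)\,q^{r-1/2}$, and then cancels the main terms using the orthogonality relation $\sum_{\bar\lambda\in\FF_q^\times/\FF_q^{\times,d}}\chi(\bar\lambda)=0$. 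Your approach buys the structural cohomological picture and would generalize to sheaves beyond rank one, but requires some care you flag only in passing: $X$ may be singular, so the identification of $H^{2r}_c$ with geometric coinvariants must go through a dense smooth open, and tameness/ramification needs to be checked to apply Weil~II cleanly with uniform constants. The paper's route is shorter and more self-contained, using only the single Lang--Weil black box already established and elementary character orthogonality, which is why it is the proof chosen there. The ``alternative elementary route'' you sketch (hyperplane slicing plus Weil's bound on curves via~\cite{Cochrane}) is a third approach again, closer to how Lang--Weil itself is often proved, and also not what the paper does, although the paper does remark in Section~\ref{sec:variants} that Cochrane's bounds could plausibly be combined with Lichtin's method as another route to Theorem~\ref{thm:sum}.
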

\begin{proof}
Let $U$ be the Kummer cover of $X$  given by $F(x)=y^d$ for $x\in X$. By our assumptions, we have that $U_\CC$ is irreducible, and thus there exist $M$ and $c$ such that for each finite field $\FF_q$, which is an algebra over $\cO$ and whose characteristic is at least $M$,  we have that $U_{\FF_q}$ is geometrically irreducible (e.g.~by model theoretic compactness)  and, by Proposition \ref{Lang-Weil}, that
$$
|\# U(\FF_q) - q^{\dim X}| \leq c q^{\dim X-1/2}.
$$
Write $\FF_q^{\times,d}$ for the set of $d$th powers in $\FF_q^\times$ and  $d_q$ for the index of $\FF_q^{\times,d}$ in $\FF_q^\times$.
Clearly, we also have
$$
\frac{\# U(\FF_q)}{d_q} =\#\{x\in X(\FF_q)\mid F(x) \mbox{ is a $d$th power in } \FF_q^\times \}.
$$
Similarly, for each such $q$ and for each $\lambda\in \FF_q^\times$, we consider $U_\lambda$  given by $F(x)=\lambda y^d$. By the uniformity of the constant in Proposition \ref{Lang-Weil}, we can choose $M$ and $c$ as above and such that, in addition,
 if the characteristic of $\FF_q$ is at least $M$ (and if $\FF_q$ is an algebra over
$\cO$), then for each $\lambda\in \FF_q^{\times}$ we have that $U_{\lambda}$ is geometrically irreducible (again, by model theoretic compactness) and
$$
|\#\{x\in X(\FF_q)\mid  \frac{F(x)}{\lambda} \mbox{ is a $d$th power in } \FF_q^\times  \} -  \frac{ q^{\dim X}}{d_q}|  \leq \change{\frac{c}{d_q}} q^{\dim X-1/2}.
$$
By orthogonality of characters, for any character $\chi$ of $\FF_q^\times$ of order $d$, we have
$$
 \sum_{\overline{\lambda} \in \FF_q^\times/\FF_q^{\times,d} }  \chi(\overline{\lambda})=0,
 $$
\change{from which the corollary follows. Indeed, the required uniformity of $c$ comes from the uniformity in Proposition \ref{Lang-Weil} and the fact that
 the complexity of the covers $U_\lambda$ is clearly bounded when $n$, $r$, $d$, and the number and degrees of the equations defining $X$ and $F$ are bounded.} 
\end{proof}

We next give a combination of Denef's formula for Igusa's local zeta function, the Lang-Weil estimates, and Corollary \ref{nontrivial}.
Possibly one may use the more advanced estimates of \cite[Theorem~1.1]{Rojas-Leon} on finite field exponential sums with multiplicative characters instead of Corollary \ref{nontrivial}.
Let $K$ be the field of fractions of $\cO$. With the notation in \S\ref{sec:resolK}, with $X={\mathbb A}^n_K$ and $f\in K[x]=K[x_1,\ldots,x_n]$
non-constant,
we consider $D_{K}$ and fix a log resolution $h\colon Y\to \AA^n_K$ of the pair  $(\AA_K^n,D_{K})$.

\begin{prop}
\label{005}
Let $f$, $Z$, $\cO$, and $h$ as above. Assume moreover that $f$ vanishes on $Z(\CC)$.
Then there exist constants $C$ and $M$ so that the following formula
holds for every local field $L$ over $\cO$ with residue field characteristic at least $M$
and every multiplicative character $\chi$ on $\cO_{L}^{\times}$:
\begin{equation}\label{eq:ZPhichi}
\cZ^Z_{f,L,\chi,s}=
\sum_{\substack{I\subseteq T_{K}}}
c_{I,Z,L,\chi} \cdot \prod_{i\in I}\frac{q_L^{-N_{i}s-\nu_{i}}}{1-q_L^{-N_{i}s-\nu_{i}}},
\end{equation}
where the complex numbers $c_{I,Z,L,\chi}$ are independent of $s$ and satisfy
\begin{equation}\label{eq:cI1}
|c_{I,Z,L,\chi}|\leq C.
\end{equation}
Moreover, for such $L$ we further have
\begin{equation}\label{eq:cI2}
c_{I,Z,L,\chi} = 0
\end{equation}
if $c(\chi)>1$ or if the order of $\chi$ does not divide $N_i$ for some $i\in I$.
Furthermore,  if $\chi$ is nontrivial and for $I'\subseteq J$ corresponding to $I$
there exist no $W,g,d$ with $(f,h,\AA^n)$ satisfying the power condition witnessed by $(I',W,g,d)$, then
\begin{equation}\label{eq:cI3}
|c_{I,Z,L,\chi}|\leq C q_L^{-1/2}.
\end{equation}
Finally, given $f$ and $h$, the constants $C$ and $M$ can be taken independently from $Z$ and $\cO$, as long as the number of equations defining $Z$ and their degrees remain bounded.
\end{prop}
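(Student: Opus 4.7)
The plan is to apply Denef's formula for Igusa's twisted local zeta function (cf.~\cite{DenefBour}), starting from the log resolution $h$. Spreading out $h$ over a suitable localization $\cO[1/N]$ yields, for every local field $L$ over $\cO$ of sufficiently large residue characteristic $\geq M$, a log resolution $\bar h \colon \bar Y \to \AA^n_{k_L}$ of the reduction of $f$, with strata $\bar E_I^\circ$ obtained by reducing the $E_I^\circ$ mod $\cM_L$. Denef's formula then expresses
\[
\cZ^Z_{f,L,\chi,s} = \sum_{I \subseteq T_K} c_{I,Z,L,\chi} \prod_{i\in I}\frac{q_L^{-N_i s - \nu_i}}{1-q_L^{-N_i s - \nu_i}}
\]
with coefficients of the form
\[
c_{I,Z,L,\chi} = q_L^{-n}(q_L - 1)^{|I|}\, S_{I,Z,L,\chi}, \qquad S_{I,Z,L,\chi} := \sum_{\substack{\bar a \in \bar E_I^\circ(k_L) \\ \bar h(\bar a) \in Z(k_L)}} \chi(\bar u(\bar a)),
\]
where $\bar u$ is the reduction of the unit $u$ appearing in the local factorization $f\circ h = u \prod_{i\in I} y_i^{N_i}$ near $E_I^\circ$.

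The bound (\ref{eq:cI1}) is immediate from $|\chi(\cdot)| \leq 1$ together with the Lang-Weil estimate (Proposition \ref{Lang-Weil}) applied to each geometric component of $\bar E_I^\circ$, yielding $|S_{I,Z,L,\chi}| \leq C' q_L^{n-|I|}$, which combined with the prefactor $q_L^{-n}(q_L-1)^{|I|}$ gives a bound independent of $L$. For the vanishing (\ref{eq:cI2}), two cases arise: if $c(\chi) > 1$, then $\chi$ is nontrivial on $1+\cM_L$, and unpacking the Denef-style local integral introduces one additional integration over a fiber of the $\acc$-map on which $\chi$ averages to zero; if $d := \ord(\chi)$ does not divide some $N_i$, $i\in I$, then $t \mapsto \chi(t^{N_i})$ is a nontrivial character of $k_L^\times$ and orthogonality of characters forces $c_{I,Z,L,\chi}=0$.

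The main content is the sharper bound (\ref{eq:cI3}). Under its hypothesis, $\chi$ is nontrivial with $d := \ord(\chi)$ dividing every $N_i$, $i\in I$ (otherwise $c_{I,Z,L,\chi}=0$ already by (\ref{eq:cI2})), and we must bound $|S_{I,Z,L,\chi}|$ by $O(q_L^{n-|I|-1/2})$. The absence of any witness $(I',W,g,d)$ of the power condition for $(f,h,\AA^n)$ means precisely that on every non-empty Zariski open $W$ of every irreducible component of $E_{I'}^\circ$ over $\CC$, the unit $u|_W$ is not the $e$-th power of any regular function, for any $e > 1$ dividing $d$. Decomposing $\bar E_I^\circ$ over $k_L$ into its geometrically irreducible components, which for residue characteristic $\geq M$ correspond to the components of $E_{I'}^\circ$ over $\CC$, we treat each component $V$ separately: if $V \subseteq \bar h^{-1}(Z)$, then the hypothesis of Corollary \ref{nontrivial} is verified for $V$ and $|\sum_{\bar a \in V(k_L)} \chi(\bar u(\bar a))| \leq c' q_L^{\dim V - 1/2}$; otherwise $V \cap \bar h^{-1}(Z)$ is a proper closed subscheme of $V$ whose $k_L$-point count is $O(q_L^{\dim V - 1})$, which is absorbed into the same bound. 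Summing over components yields (\ref{eq:cI3}).

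Finally, the uniformity claim follows from the uniformity assertions in Proposition \ref{Lang-Weil} and Corollary \ref{nontrivial}: the geometric complexity of $\bar E_I^\circ$, of $\bar u$, and of $\bar h^{-1}(Z)$ remains bounded as $Z$ varies among closed subschemes defined by a bounded number of equations of bounded degree. The main technical obstacle will be the careful Denef-style unpacking of the coefficient $c_{I,Z,L,\chi}$ into the clean character-sum form above, and the verification that good reduction of $h$ in residue characteristic $\geq M$ suffices to run the component-wise application of Corollary \ref{nontrivial}, in particular that irreducibility over $\CC$ of the strata $E_{I'}^\circ$ persists after reduction for all sufficiently large residue characteristic.
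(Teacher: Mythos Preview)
Your proposal is correct and follows essentially the same approach as the paper's proof: Denef's explicit formula (\ref{eq:c-denef}) for the coefficients, Lang--Weil for (\ref{eq:cI1}), Denef's theorems for (\ref{eq:cI2}), and a component-wise application of Corollary~\ref{nontrivial} together with Lang--Weil on the lower-dimensional intersection with $h^{-1}(Z)$ for (\ref{eq:cI3}). The paper is slightly more explicit on two points you flag only as technical obstacles: the hypothesis $f(Z(\CC))=0$ is invoked in the $c(\chi)>1$ case, and the possibility that a $k_L$-irreducible component of $\bar E_I$ fails to be geometrically irreducible is disposed of by observing that, by smoothness, such a component has no $k_L$-rational points.
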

\begin{proof}
By \cite[Theorem~2.1]{Den5} (or, equivalently, \cite[Theorem~3.3]{DenefBour}), if $c(\chi)>1$, and since $f$ vanishes on $Z(\CC)$, we have $c_{I,Z,L,\chi}=0$ for all $I$ and all $Z$ (as soon as the residue field characteristic is large enough) and we are done for such $\chi$.
Let us thus take $\chi$ with $c(\chi)=1$. The existence of the complex numbers $c_{I,Z,L,\chi}$
such that (\ref{eq:ZPhichi}) holds, as well as their independence of $s$ follow from Denef's formula \cite[Theorem~2.2]{Den5}
 (or, equivalently, \cite[Theorem~3.4]{DenefBour}), where also explicit descriptions of the  $c_{I,Z,L,\chi}$ are given as finite exponential sums (as soon as the residue field characteristic is large enough). Precisely, the explicit description of the $c_{I,Z,L,\chi}$ given in \cite[Theorem~2.1]{Den5} (or \cite[Theorem~3.3]{DenefBour})
is as follows:
\begin{equation}\label{eq:c-denef}
c_{I,Z,L,\chi}  = \frac{(q_L-1)^{ \# I}}{q_L^n} \sum_{a\in E_I^\circ (k_L),\ h(a)\in Z(k_L)}\chi(u(a))
\end{equation}
if the order of $\chi$ divides $N_i$ for each $i\in I$ and if the characteristic of $k_L$ is sufficiently large (depending only on $f$ and $h$),
where we take natural reductions modulo the maximal ideal $\cM_L$ of $\cO_L$ when we write $u(a)$, $h(a)$ and $E_I^\circ (k_L)$.
 The bound (\ref{eq:cI1}) now follows from the Lang-Weil estimates for bounding the number of elements in $E_I^\circ (k_L)$.
Further, if $I$ is such that the order of $\chi$ does not divide $N_i$ for some $i\in I$, then $c_{I,Z,L,\chi}=0$ by \cite[Theorem~2.2]{Den5}
(or \cite[Theorem~3.4]{DenefBour}), still assuming that the residue field characteristic is large enough. This proves (\ref{eq:cI2}).

We still need to show (\ref{eq:cI3}). Suppose thus that $\chi$ is nontrivial and, given $I$,  that there does not exist $W,g,d$ such that $(f,h,\AA^n)$ satisfies the power condition witnessed by $(I',W,g,d)$,  with $I'$ corresponding to $I$.
For those $L$ such that the reduction of $E_I$ modulo $\cM_L$ has no irreducible component (defined over $k_L$) which is moreover geometrically irreducible, we reason as follows. By the smoothness of $E_I$ we have that the reduction of $E_I$ modulo $\cM_L$ is also smooth, as soon as $p_L$ is large.  Hence, if no irreducible component of the reduction of $E_I$ modulo $\cM_L$  is moreover geometrically irreducible, then there are no $k_L$-rational points on the reduction of $E_I$ by its smoothness, and, (\ref{eq:cI3}) is clear.
Suppose now that there is an irreducible component of $E_{I,k_L}$  (the reduction of $E_I$ modulo $\cM_L$) which is geometrically irreducible.
By working separately for each component of $E_{I,k_L}$, and using the Lang-Weil estimates
in order to see that we can ignore  algebraic subsets of codimension at least $1$, we can choose an affine open $V$ of $E_I^\circ$ and restrict the summation index in (\ref{eq:c-denef}) by imposing both $a\in V(k_L)$ and $h(a)\in Z(k_L)$.
Furthermore, since the power condition for $(f,h,\AA^n)$ does not hold for any witnesses  of the form $(I',W,g,d)$, with $I'$ corresponding to $I$ and any integer $d>1$,
we may apply Corollary \ref{nontrivial} to the sum in (\ref{eq:c-denef}) restricted to $a\in V(k_L)$ to find the bound from (\ref{eq:cI3}) in the case that
$V_{k_L}\cap h^{-1}(Z_{k_L})$ has dimension equal to $\dim E_I$, and Proposition \ref{Lang-Weil} in the case that its dimension  is less than $\dim E_I$, with $V_{k_L}$ and $Z_{k_L}$ denoting the reductions.  The proposition is proved: note that the uniformity of $C$ and $M$ for varying $Z$ and $\cO$  follows from the uniformity of Proposition \ref{Lang-Weil} and Corollary \ref{nontrivial}.
\end{proof}

\subsection{}

Our strategy for proving Theorem \ref{thm:sum} is to first reduce to the case when $f$ vanishes on $Z(\CC)$. Next, we relate the exponential sums to Igusa's local zeta functions using Proposition \ref{004}, and finally we estimate the different parts in (\ref{eq:1.4.4-Bour}) of Proposition \ref{004} using Proposition \ref{005}, Corollary \ref{cor:num}, and the following two propositions.
 Let $f\in\cO[x]$ and $Z$ be as in Theorem \ref{thm:sum} and let $K$ be the number field  which is the field of fractions of $\cO$. We fix a log resolution $h:Y\to\AA^n_K$ of $D_{K}$.
 In order to estimate the first term in (\ref{eq:1.4.4-Bour}), we use the following result from
  \cite{Saskia-Kien}.

\begin{prop}[\cite{Saskia-Kien}, Lemma 4.1]\label{trivial}Given $f$ and $h$ as above, there exist positive constants $C$ and $M$ such that, for any integer $m\geq 2$, any ring of integers $\cO_1$ containing $\cO$, any closed subscheme $Z$  of $\AA^n_{\cO_1}$ such that $f$ vanishes on $Z(\CC)$, and any local field $L$ over $\cO_1$ with $p_L$ larger than $M$, we have
$$
|\cZ^Z_{f,L,1,0}+\textnormal{Coeff}_{t^{m-1}}\dfrac{(t-q)\cZ^Z_{f,L,1,s}}{(q-1)(1-t)}|
\leq Cm^{n-1}q^{-m\cdot \lct_Z(f)},
$$
where $q=q_L$ and $t=q^{-s}$.
\end{prop}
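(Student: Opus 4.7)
My plan is to combine Denef's formula (already available via Proposition~\ref{005}) with an explicit coefficient extraction that reduces the claim to a clean bound on the Taylor coefficients of $\cZ^Z_{f,L,1,s}$ in the variable $t=q^{-s}$.

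First, apply Proposition~\ref{005} with the trivial character. Since $c(1)=1$ and the order of $1$ divides every $N_i$, no vanishing conditions are imposed and, for $p_L\geq M$, one has
$$\cZ^Z_{f,L,1,s} \;=\; \sum_{I\subseteq T_K} c_{I,Z,L,1}\prod_{i\in I}\frac{t^{N_i}}{q^{\nu_i}-t^{N_i}},$$
with $|c_{I,Z,L,1}|\leq C_0$ uniformly in $Z$, $\cO_1$, and $L$ by the last assertion of Proposition~\ref{005}. Expanding each factor geometrically yields $\cZ^Z_{f,L,1,s}=\sum_{k\geq 0}a_k t^k$ with
$$a_k \;=\; \sum_{I\subseteq T_K}c_{I,Z,L,1}\sum_{\substack{k_i\geq 1,\,i\in I\\ \sum_i N_i k_i=k}}q^{-\sum_i \nu_i k_i}.$$
Since each factor $t^{N_i}/(q^{\nu_i}-t^{N_i})$ is regular at $t=1$, the series converges at $t=1$ and $\cZ^Z_{f,L,1,0}=\sum_{k\geq 0}a_k$.

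Second, I carry out the combinatorial step. Direct expansion gives $\frac{t-q}{(q-1)(1-t)} = -\frac{q}{q-1} - \sum_{k\geq 1}t^k$; multiplying by $\sum_j a_j t^j$ and reading off the coefficient of $t^{m-1}$ produces $-\frac{q}{q-1}a_{m-1}-\sum_{j=0}^{m-2}a_j$, and after adding $\cZ^Z_{f,L,1,0}=\sum_{j\geq 0}a_j$ everything collapses to
$$\cZ^Z_{f,L,1,0}+\textnormal{Coeff}_{t^{m-1}}\frac{(t-q)\cZ^Z_{f,L,1,s}}{(q-1)(1-t)} \;=\; -\frac{a_{m-1}}{q-1}+\sum_{j\geq m}a_j.$$
Thus the proposition reduces to bounding $|a_{m-1}|/(q-1)$ and the tail $\sum_{j\geq m}|a_j|$.

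Third, I exploit the log-canonical threshold. By the explicit formula~(\ref{eq:c-denef}) used in Proposition~\ref{005}, the coefficient $c_{I,Z,L,1}$ vanishes unless $E_I^\circ(k_L)\cap h^{-1}(Z(k_L))\neq\emptyset$; for $p_L$ large the Lang--Weil estimates (Proposition~\ref{Lang-Weil}) then force $h(E_i)\cap Z\neq\emptyset$ for every $i\in I$, and by the definition of $\lambda:=\lct_Z(f)$ this yields $\nu_i/N_i\geq\lambda$ for each such $i$. Consequently $\sum_i\nu_i k_i\geq\lambda\sum_i N_i k_i=\lambda k$ whenever the corresponding summand in $a_k$ is nonzero. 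Combined with the bound $\#I\leq n$ (simple normal crossings) and the elementary estimate $k^{\#I-1}$ on the number of compositions of $k$ with $\#I$ positive parts, this gives $|a_k|\leq C_1 k^{n-1}q^{-\lambda k}$. Summing the geometric-type tail yields $\sum_{j\geq m}|a_j|\leq C_2 m^{n-1}q^{-\lambda m}$ and analogously $|a_{m-1}|/(q-1)\leq C_3 m^{n-1}q^{-\lambda m}$; combining gives the claimed bound. The main obstacle is uniformity: one needs the constant in the tail estimate to be independent of $q$ and of the varying $Z$, which is supplied by the uniform lower bound $\lambda\geq\lct(f)>0$ (depending only on $f$) together with the uniform constant $C_0$ from Proposition~\ref{005}.
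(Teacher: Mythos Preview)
Your approach differs from the paper's, which simply cites \cite[Lemma~4.1]{Saskia-Kien} and remarks that the argument there carries over from $Z=\{0\}$ to general $Z$. Your direct route via Denef's formula, the explicit identity
\[
\cZ^Z_{f,L,1,0}+\textnormal{Coeff}_{t^{m-1}}\frac{(t-q)\cZ^Z_{f,L,1,s}}{(q-1)(1-t)}=-\frac{a_{m-1}}{q-1}+\sum_{j\ge m}a_j,
\]
and the coefficient bound $|a_k|\le C_1 k^{n-1}q^{-\lambda k}$ is correct and self-contained; your handling of the uniformity of the constant $C$ through the lower bound $\lct_Z(f)\ge\lct(f)>0$ is also fine.

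There is, however, one genuine gap. The step ``for $p_L$ large the Lang--Weil estimates then force $h(E_i)\cap Z\neq\emptyset$'' is not justified by Lang--Weil: that result gives point counts, not a passage from $k_L$-points to $\CC$-points. What you actually need is that whenever $c_{I,Z,L,1}\neq 0$ (so $E_I^\circ(k_L)\cap h^{-1}(Z)(k_L)\neq\emptyset$) one has $\nu_i/N_i\ge\lct_Z(f)$ for every $i\in I$. For a \emph{fixed} $Z$ this follows from a standard spreading-out argument (if $h(E_i)_{\CC}\cap Z_{\CC}=\emptyset$ then the same holds mod $p$ for $p$ outside a finite set), but the threshold $M$ this produces depends on $Z$. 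With $M$ genuinely uniform in $Z$ the implication can fail: take $f=x^2(x-1)^3$, $Z=V(px)\subset\AA^1_{\ZZ}$, and $L=\QQ_p$ for $p$ arbitrary; then $Z(\CC)=\{0\}$ gives $\lct_Z(f)=1/2$, yet $Z(k_L)=k_L$, so the divisor with $\nu/N=1/3$ contributes and the bound with exponent $1/2$ fails for large $m$. What your argument \emph{does} establish uniformly in $Z$ is the bound with the residue-field quantity $\tau_{Z,k_L}(f)$ in place of $\lct_Z(f)$ --- precisely the exponent the paper uses in its uniform Theorem~\ref{CVlct:unif}. For fixed $Z$ and $p_L$ large enough depending on $Z$ these two quantities agree, and in that regime your proof of the proposition is complete.
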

\begin{proof}
Lemma 4.1 of \cite{Saskia-Kien} states and proves this for $Z=\{0\}$, but in its proof one can replace this $\{0\}$ by any choice of $Z$ such that $f$ vanishes on $Z(\CC)$, and $\lct_0(f)$ by $\lct_Z(f)$. Indeed, the estimates (4.7) and  (4.9) of \cite{Saskia-Kien}  are valid for any $Z$ such that $f$ vanishes on $Z(\CC)$, instead of $\{0\}$, and with $\lct_Z(f)$ instead of $\lct_0(f)$.
\end{proof}
We now estimate the remaining parts in (\ref{eq:1.4.4-Bour}) of Proposition \ref{004}.

\begin{prop}\label{estimation2}\label{estimation1}
Let $f$, $\cO$, and $Z$ be as in Theorem \ref{thm:sum}.
If $I\subseteq T_K$ is such that $h\big(E_I(\CC)\big)\cap Z(\CC)$ is non-empty,
then for every
$q>1$ and every $m\geq 2$, we have
\begin{equation}\label{eq:est2}
|\mbox{\textnormal{Coeff}}_{t^{m-1}}(\prod_{i\in I}\frac{t^{N_{i}}q^{-\nu_{i}}}{1-t^{N_{i}}q^{-\nu_{i}}})|\leq q^{-(m-1)\lct_Z(f)+\sigma_{I}}m^{\#(I)-1},
\end{equation}
where
\begin{equation}\label{eq:sigmaI1}
\sigma_I= -\sum_{i\in I} \big(\nu_i-N_{i}\lct_Z(f)\big).
\end{equation}
  \end{prop}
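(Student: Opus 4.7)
The plan is to expand the given generating function as a power series in $t$ and bound the $t^{m-1}$ coefficient term by term. First, I would write
\[
\prod_{i\in I}\frac{t^{N_i}q^{-\nu_i}}{1-t^{N_i}q^{-\nu_i}}
= \sum_{(k_i)_{i\in I},\ k_i\geq 1} t^{\sum_i k_i N_i} q^{-\sum_i k_i \nu_i},
\]
so that extracting the coefficient of $t^{m-1}$ gives
\[
\mathrm{Coeff}_{t^{m-1}}\Bigl(\prod_{i\in I}\tfrac{t^{N_i}q^{-\nu_i}}{1-t^{N_i}q^{-\nu_i}}\Bigr)
= \sum_{\substack{(k_i)_{i\in I},\ k_i\geq 1 \\ \sum_i k_iN_i = m-1}} q^{-\sum_i k_i \nu_i}.
\]

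The first key input is the inequality $\lct_Z(f) \leq \nu_i/N_i$ for each $i \in I$. This follows because $E_I \subseteq E_i$, so the hypothesis $h(E_I(\CC))\cap Z(\CC)\neq\emptyset$ yields a point $x\in Z(\CC)$ lying in $h(E_i(\CC))$; then the standard bound for the log canonical threshold at a point of $h(E_i)$ gives $\lct_Z(f)\leq\lct_x(f)\leq\nu_i/N_i$. Consequently each quantity $\nu_i - N_i\lct_Z(f)$ is nonnegative, and rewriting
\[
\sum_{i\in I} k_i \nu_i = \lct_Z(f)\sum_{i\in I}k_iN_i + \sum_{i\in I}k_i(\nu_i - N_i\lct_Z(f))
\geq (m-1)\lct_Z(f) + \sum_{i\in I}(\nu_i - N_i\lct_Z(f)),
\]
where in the last step I use $k_i\geq 1$ together with nonnegativity of $\nu_i - N_i\lct_Z(f)$. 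Since $q>1$, this yields the pointwise bound
\[
q^{-\sum_i k_i \nu_i} \leq q^{-(m-1)\lct_Z(f) + \sigma_I}
\]
for every admissible tuple, with $\sigma_I$ as defined in \eqref{eq:sigmaI1}.

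Finally, I would bound the number of admissible tuples. Fixing any $\#(I)-1$ of the $k_i$'s, the constraint $\sum_i k_i N_i = m-1$ determines the remaining coordinate, and each $k_i$ is at most $(m-1)/N_i \leq m-1 \leq m$; hence the number of tuples is at most $m^{\#(I)-1}$. Multiplying the uniform bound by this count gives the desired inequality \eqref{eq:est2}. There is no real obstacle here — the argument is purely combinatorial once one has extracted the two ingredients (the lower bound $\nu_i \geq N_i\lct_Z(f)$ from the hypothesis $h(E_I)\cap Z\neq\emptyset$, and the crude cardinality bound on the index set).
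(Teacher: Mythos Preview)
Your proof is correct and follows essentially the same approach as the paper's: expand the product as a sum over tuples with $\sum_i k_iN_i=m-1$, use $\nu_i\geq N_i\lct_Z(f)$ (from $h(E_I)\cap Z\neq\emptyset$) to bound each summand by $q^{-(m-1)\lct_Z(f)+\sigma_I}$, and then bound the number of tuples by $m^{\#(I)-1}$. The only cosmetic difference is that the paper indexes with $a_i\geq 0$ and writes $a_i+1$ where you write $k_i\geq 1$.
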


\begin{proof}
Since $h\big(E_I(\CC)\big)\cap Z(\CC)$ is non-empty, it follows that $\frac{\nu_i}{N_i} \geq \lct_Z(f)$ for all $i\in I$.
We have
$$
\text{Coeff}_{t^{m-1}}\prod_{i\in I}\frac{t^{N_{i}}q^{-\nu_{i}}}{1-t^{N_{i}}q^{-\nu_{i}}}=\sum_{(a_{i})_{i\in I}\in A_{I,m}}q^{-\sum_{i\in I}\nu_{i}(a_{i}+1)},
$$
where
$$
A_{I,m}=\{(a_i)_{i\in I}\in \NN^{\#I}\mid \sum_{i\in I}N_{i}(a_{i}+1)=m-1\}.
$$
For each $(a_{i})_{i\in I}\in A_{I,m}$, we have
\begin{align*}
-\sum_{i\in I}\nu_{i}(a_{i}+1) 
&=-(m-1)\lct_Z(f)-\sum_{i\in I}(a_{i}+1)(\nu_i-N_i\lct_Z(f))\\
&\leq -(m-1)\lct_Z(f)  -\sum_{i\in I}(\nu_i- N_{i}\lct_Z(f)),
\end{align*}
where the inequality follows from the fact that $\frac{\nu_i}{N_i}\geq \lct_Z(f)$.
Since $\#(A_{I,m})\leq m^{\#(I)-1}$, we obtain the assertion in the proposition.
\end{proof}

We can now prove our main results on exponential sums.

\begin{proof}[Proof of Theorem \ref{thm:sum}]
Let $f$, $\cO$, and $Z$ be as in Theorem \ref{thm:sum}. 
Let $V_f = \{z_1, \ldots, z_e\}$ be the set of critical values of $f$ over $\CC$.
Note that the $z_i$ are algebraic over $\QQ$, hence we can choose a nonzero integer $N$ such that the $Nz_i$ lie in the integral closure of $\ZZ$ inside $\CC$. Write $\cO_{z_i}$ for the integral closure of $\cO[Nz_i]$ inside its fraction field. Let $Z_i$ be the intersection of $Z$ with the closed subvariety of $\AA^n_{\cO_{z_i}}$ given by $Nf(x)=Nz_i$.
By \cite[Remark~4.5.3]{DenefBour} and  \cite[Lemma~5.1]{Saskia-Kien}, there is $M$ such that for any local field $L$ over $\cO$  of residue field characteristic  at least $M$ and
any nontrivial additive character $\psi$ on $L$ with $m_\psi\geq 2$,  we have
\begin{equation}\label{summand}
E^Z_{f,L,\psi} = \sum_{i}  E^{Z_i}_{f,L,\psi},
\end{equation}
where the sum is over those $i$ such that $L$ admits a unit preserving ring homomorphism  $\cO_{z_i}\to L$. (Here we use the fact  that $m_\psi\geq 2$.)
Up to working with each $Z_i$ separately, with  $f$ replaced by $N(f-z_i)$, it follows from (\ref{summand})  that it is enough to only consider $Z$ such that $f$ vanishes on $Z(\CC)$.
But this case follows by combining the estimates and equalities from Propositions \ref{004}, \ref{005}, \ref{trivial}, \ref{estimation1}, and Corollary \ref{cor:num}.
Indeed, there clearly exists a uniform bound on the number of characters of $\cO_L^{\times}$ \change{with order} dividing
$\prod_{i\in T_{K}} N_i$, and, by Propositions \ref{004} and \ref{005},  these characters are the only ones that can contribute to the exponential sums $E^Z_{f,L,\psi}$.
\end{proof}

By the uniformity in $Z$ in the results used in the proof of Theorem \ref{thm:sum}, we can show a uniform variant, where both constants $C$ and $M$ can be taken independently from $Z$. For simplicity of notation we focus on the case that $f$ vanishes on $Z(\CC)$, leaving
to the reader  the task of formulating and proving the variant of Theorem \ref{CVlct:unif} without the condition $f\big(Z(\CC)\big)=0$.
Consider a ring of integers $\cO_1$ containing $\cO$, and a closed subscheme $Z$ of $\AA^n_{\cO_1}$ with $f\big(Z(\CC)\big)=0$.
For such data and any finite field $\FF_q$ of large enough characteristic (depending only on $f$ and $h$) allowing a ring morphsim $\cO_1\to \FF_q$, define
$$
\tau_{Z,\FF_q}(f) = \min_{i}\frac{\nu_i}{N_i}
$$
where the minimum is taken over  those $i\in T_K$ such that $h(E_i)(\FF_q)$ has non-empty intersection with $Z(\FF_q)$, and using notation for the reduction of $h(E_i)$ as before.

\begin{thm}[Uniformity in $Z$]\label{CVlct:unif}
Let $N>0$ and $n>0$ be integers and let $f\in\cO[x_1,\ldots,x_n]$ be a non-constant polynomial over a ring of integers $\cO$.
Then there exist $C>0$ and $M>0$ such that for each ring of integers $\cO_1$ containing $\cO$, for each closed subscheme $Z$ of $\AA^n_{\cO_1}$ with $f\big(Z(\CC)\big)=0$ and such that $Z$ is defined by at most $N$ equations, each of degree at most $N$, for each local field $L$ over $\cO_1$ with residue field characteristic at least $M$, and for each nontrivial additive character $\psi$ on $L$ with $m_\psi\geq 2$, we have
 \begin{equation}\label{eq:unif1}
|E^{Z}_{f,L,\psi}| <  C m_{\psi}^{n-1} q_L^{ - \tau_{Z,k_L}(f) m_{\psi} }.
\end{equation}
\end{thm}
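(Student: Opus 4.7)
The plan is to run through the proof of Theorem \ref{thm:sum} step by step, tracking that every constant that appears depends only on $f$, the fixed log resolution $h$, and the combinatorial complexity bound $N$ for $Z$, but not on $Z$ itself. Since we assume $f$ vanishes on $Z(\CC)$, the reduction via critical values of $f$ at the beginning of the proof of Theorem \ref{thm:sum} can be bypassed, and Proposition \ref{004} applies directly to decompose $E^Z_{f,L,\psi}$ into the trivial-character contribution $\cZ^Z_{f,L,1,0} + \mathrm{Coeff}_{t^{m-1}}((t-q)\cZ^Z_{f,L,1,s}/((q-1)(1-t)))$ plus a Gauss-sum-weighted combination of $\mathrm{Coeff}_{t^{m-c(\chi)}}(\cZ^Z_{f,L,\chi,s})$ over nontrivial $\chi$.

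Plugging the Denef-type formula from Proposition \ref{005} into both contributions reduces the problem to estimating sums of $c_{I,Z,L,\chi}$ times coefficients of $\prod_{i \in I}q^{-N_is-\nu_i}/(1 - q^{-N_is-\nu_i})$. The explicit formula (\ref{eq:c-denef}) shows that $c_{I,Z,L,\chi}$ vanishes whenever $E_I^\circ(k_L) \cap h^{-1}(Z(k_L))$ is empty; for any $i$ in a contributing $I$ one then has $h(E_i)(k_L) \cap Z(k_L) \neq \emptyset$ and hence $\nu_i/N_i \geq \tau_{Z,k_L}(f)$. Thus $\tau_{Z,k_L}(f)$ takes over the role played by $\lct_Z(f)$ in the subsequent estimates.

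For the nontrivial character terms, Proposition \ref{005} restricts attention to finitely many $\chi$ (those with $c(\chi)=1$ and order dividing some product of the $N_i$), a number bounded independently of $L$ and $Z$. Proposition \ref{estimation1} then bounds the relevant coefficient by $m^{\#I-1}q^{-(m-1)\tau_{Z,k_L}(f) + \sigma_I}$ with $\sigma_I = -\sum_{i \in I}(\nu_i - N_i \tau_{Z,k_L}(f))$; the hypothesis $\nu_i/N_i \geq \tau_{Z,k_L}(f)$ that its proof needs is exactly what the vanishing above provides. Either the power condition holds for $(f,h,\AA^n)$ with a witness corresponding to $I$, in which case Corollary \ref{cor:num} forces $\sigma_I + 1 \leq 1/d$, or it fails and Proposition \ref{005} delivers the extra decay $|c_{I,Z,L,\chi}| \leq Cq_L^{-1/2}$. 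For the trivial-character contribution, the argument of Lemma~4.1 in \cite{Saskia-Kien} goes through with $\tau_{Z,k_L}(f)$ in place of $\lct_Z(f)$, since the estimates (4.7) and (4.9) of \cite{Saskia-Kien} are applied to exactly the $I$'s for which $c_{I,Z,L,1}$ is nonzero.

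The main obstacle is verifying the uniformity itself. For Proposition \ref{005} and Corollary \ref{nontrivial} the uniformity in $Z$, subject to bounded defining data, is explicitly asserted; for Proposition \ref{estimation1} the bound depends only on $h$; and for the trivial character term, inspecting the proof of Lemma~4.1 in \cite{Saskia-Kien} shows that the Lang--Weil-type counts of $E_I^\circ(k_L)$ and the combinatorial manipulation of the Denef formula both depend on $Z$ only through the uniform bound of Proposition \ref{Lang-Weil}. Collecting all terms and absorbing the uniformly bounded number of contributing characters into a single constant $C$ produces (\ref{eq:unif1}).
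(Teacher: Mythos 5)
Your proposal follows the same approach as the paper: the paper's own proof of this theorem is a single sentence stating that it follows the proof of Theorem~\ref{thm:sum} by exploiting the uniformity assertions already built into Propositions~\ref{004}, \ref{005}, \ref{trivial} and \ref{estimation1}, and you have traced through exactly this. You correctly identified the two substantive points: that $\tau_{Z,k_L}(f)$ replaces $\lct_Z(f)$ because the sole input from $Z$ to Proposition~\ref{estimation1} is the inequality $\nu_i/N_i \geq \tau_{Z,k_L}(f)$, which holds for any contributing index set $I$; and that the finite set of contributing nontrivial characters, together with the uniformity assertions in Proposition~\ref{005} and Corollary~\ref{nontrivial} for bounded defining data, gives a uniform constant. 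One small imprecision: the inequality you attribute to Corollary~\ref{cor:num} should read $\lct_Z(f)+\sigma_I \leq 1/d$ (with $\sigma_I$ formed with $\lct_Z(f)$), not ``$\sigma_I+1\leq 1/d$''; the latter would generally be false (it already fails when all $\nu_i/N_i$ in $I$ equal $\lct_Z(f)$, in which case $\sigma_I=0$). This does not affect the validity of your overall outline.
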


The case of Theorem \ref{CVlct:unif} with the extra assumption that $Z$ is a point in $\cO^n$ was predicted in \cite[Conjecture 1.2 (1.2.2)]{CVeys}.

\begin{proof}[Proof of Theorem \ref{CVlct:unif}]
The proof follows closely the proof of Theorem \ref{thm:sum}, by exploiting in addition
the uniformity assertions  in Propositions \ref{004}, \ref{005}, \ref{trivial} and \ref{estimation1}.
\end{proof}

Note that, given any $\cO_1$ and $Z$ as in Theorem \ref{CVlct:unif}, there is $M_Z$ such that $\tau_{Z,k_L}(f)$ equals  $ \sigma_Z(f)$
for all $L$ over $\cO_1$ with $p_L>M_Z$.  However, in Theorem \ref{CVlct:unif} we can take  $M$ independent of  $Z$.

 \subsection{Poles of largest possible order}\label{subsec:poles}
We formulate a consequence of our work to poles of maximal possible order for Igusa's local zeta functions, in the twisted case.
Recall that Veys' 1999 conjecture from \cite{VeysLaer}, solved by Nicaise and Xu in \cite{NiXu},
says that any pole of maximal possible order for Igusa's local zeta function associated to $f$ is of the form $-1/N$, for a positive integer $N$; moreover, in this case
the log canonical threshold of $f$ is equal to $1/N$. Recall that
for any polynomial $f$ in $n$ variables over $\cO$, and closed subscheme $Z$ of $\AA_\cO^n$, any local field $L$ over $\cO$ of characteristic zero and any character $\chi$ of $\cO_L^\times$, the maximal possible order of any pole of $\cZ_{f,L,\chi,s}^{Z}$  is $n$.
Nicaise and Xu treat the non-twisted case of Igusa's local zeta function with $Z=\{0\}$, namely, $\cZ_{f,L,\chi_{\rm triv},s}^{0}$, with $\chi_{\rm triv}$ being the trivial character.

Our work has consequences for poles of maximal possible order in the twisted case of Igusa's local zeta function around $Z$, namely, for $\cZ_{f,L,\chi,s}^{Z}$ with nontrivial $\chi$.
It is easy to check that if $s_0$ is a pole of maximal possible order of
$$
\cZ_{f,L,\chi_{\rm triv},s}^{Z},
$$
and if $\chi$ has order $d$, then $s_0/d$ is a pole of maximal possible order of
$$
\cZ_{f^d,L,\chi,s}^{Z}.
$$
Motivated by this observation and by \cite{VeysLaer}, \cite{NiXu}, one may wonder whether $s_0$ being a pole of maximal possible order of $\cZ_{f,L,\chi,s}^{Z}$ with $\chi$ a character of order $d> 1$ implies that
$s_0=-\lct_{Z}(f)=-\dfrac{1}{dk}$ for a positive integer $k$.
We obtain the following result in this direction.
\begin{prop}\label{prop:veys} Let $f$ and $Z$ be as above and let $d\geq 1$ be an integer. If $s_0=-\lct_Z(f)$ is a pole of maximal possible order of $\cZ_{f,L,\chi,s}^{Z}$ for infinitely many $L$ with arbitrarily large residue field characteristic and with $\chi$ a character of order $d$ on $\cO_L^\times$, then
$$
\lct_Z(f)\leq \dfrac{1}{d}.
$$
\end{prop}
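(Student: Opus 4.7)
Write $\lambda:=\lct_Z(f)$ and $\cI_0:=\{I\subseteq T_K:|I|=n \text{ and } \nu_i/N_i=\lambda\text{ for all } i\in I\}$. My plan is to combine Denef's explicit expansion from Proposition \ref{005} with the upper bound on the log canonical threshold from Corollary \ref{cor:num}. In the decomposition of $\cZ^Z_{f,L,\chi,s}$ given by Proposition \ref{005}, the summand indexed by $I\subseteq T_K$ has at $s_0=-\lambda$ a pole of order $\#\{i\in I:\nu_i/N_i=\lambda\}$, which is at most $|I|\leq n$. So a pole of order exactly $n$ at $s_0$ is possible only if the contribution from the $I\in \cI_0$ is nonzero in the Laurent expansion; in particular at least one $I\in\cI_0$ must satisfy $c_{I,Z,L,\chi}\neq 0$.

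Since $\cI_0$ is a finite set not depending on $L$, the pigeonhole principle applied to the hypothesis produces a single fixed $I_0\in\cI_0$ with $c_{I_0,Z,L,\chi}\neq 0$ for infinitely many local fields $L$ (with $p_L$ unbounded) and characters $\chi$ of order $d$. For $p_L$ large compared to $d$, any such $\chi$ is tame, i.e.\ $c(\chi)=1$, so the vanishing statement (\ref{eq:cI2}) of Proposition \ref{005} forces $d\mid N_i$ for every $i\in I_0$. The explicit formula (\ref{eq:c-denef}) then also forces $E_{I_0}^\circ(k_L)\cap h^{-1}(Z)(k_L)$ to be nonempty for infinitely many such $L$, and since $|I_0|=n$ gives $E_{I_0}^\circ=E_{I_0}$, spreading out yields a geometric point $a\in(E_{I_0}\cap h^{-1}(Z))(\CC)$.

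At this point the power condition of Definition \ref{defn:pow} becomes essentially automatic. Let $I'\subseteq J$ consist of the $n$ indices of the irreducible components over $\CC$ of the divisors $E_i$ ($i\in I_0$) containing $a$, set $W:=\{a\}$ (a nonempty open in the corresponding irreducible component, which is a single reduced point), and, using algebraic closedness of $\CC$, pick $g\in\cO_W(W)=\CC$ with $g^d=u(a)$. The tuple $(I',W,g,d)$ witnesses the power condition for $(f,h,Z)$: $h(W)\subseteq Z$ by the choice of $a$, the divisibility $d\mid N_{ij}=N_i$ is inherited from the previous step, and $u|_W=g^d$ by construction. Corollary \ref{cor:num}, together with $\nu_{ij}=\nu_i$, $N_{ij}=N_i$, and $\nu_i-N_i\lambda=0$ on $I_0$, gives
\[
\lambda\leq \frac{1}{d}+\sum_{(i,j)\in I'}(\nu_{ij}-N_{ij}\lambda)=\frac{1}{d}+\sum_{i\in I_0}(\nu_i-N_i\lambda)=\frac{1}{d},
\]
which is the desired bound.

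The crux of the argument, which I would verify most carefully, is that the exact nonvanishing $c_{I_0,Z,L,\chi}\neq 0$ (obtained from the hypothesis only through the order-$n$ pole, without any quantitative lower bound) is strong enough to invoke (\ref{eq:cI2}) and import the divisibility $d\mid N_i$ into the geometric picture over $\CC$; from there the algebraic closedness of $\CC$ makes the power condition free at a single geometric point, and Corollary \ref{cor:num} does the rest.
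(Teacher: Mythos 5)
Your argument is correct and takes essentially the same route as the paper's (very terse) proof, which simply asserts that Proposition~\ref{005} yields the power condition with $|I|=n$, $d\mid N_i$, and $\nu_i/N_i=\lct_Z(f)$, and then invokes Corollary~\ref{cor:num}. You supply the details the paper leaves implicit---the pole-order accounting forcing $c_{I,Z,L,\chi}\neq 0$ for some $I$ of the right type, the pigeonhole to fix $I_0$, the use of (\ref{eq:cI2}) to extract $d\mid N_i$, spreading out to a geometric point, and the observation that on a zero-dimensional stratum over $\CC$ the power condition is automatic once the divisibility holds.
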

\begin{proof} If $s_0=-\lct_{Z}(f)$ is a pole of maximal possible \change{order} of $\cZ_{f,L,\chi,s}^{Z}$ for infinitely many $L$ with arbitrarily large residue field characteristic, then the power condition holds for $(f,h,Z)$ witnessed by some $(I,W,g,d)$ with $|I|=n$, $d|N_i$ and $\lct_Z(f)=\frac{\nu_i}{N_i}$ for all $i\in I$. Indeed, this follows from Proposition \ref{005}. The proposition now follows by applying the bound (\ref{eq:Sub:weak:conj}) from Corollary \ref{cor:num} for this $I$ and $d$.
\end{proof}

\subsection{Optimality of the bounds and the motivic oscillation index}\label{sec:opt}

In this last section we give lower bounds for $|E_{f,L,\psi}^{Z}|$,
showing the optimality of the exponent $\sigma_Z(f)$ in the bounds of Theorem \ref{thm:sum} when $\sigma_Z(f)<1$ and in some cases also when $\sigma_Z(f)=1$.
In fact, we refine the notion of motivic oscillation index of $f$ from \cite{Cigumodp} to a variant $\moi_Z(f)$ around $Z$ (with a sign change compared to \cite{Cigumodp}), and show on one hand the optimality of bounds with $\moi_Z(f)$ in the exponent, and on the other hand, the equality $\moi_Z(f)=\sigma_Z(f)$ in the case of non-rational singularities. We will conclude by rephrasing the remaining part of Igusa's conjecture optimally in terms of $\moi_Z(f)$.

We use the notation in \S\ref{sec:Lchi}. Thus, $f$ is a non-constant polynomial in $n$ variables with coefficients in $\cO$, $Z$ is a closed subscheme of $\AA_\cO^n$, and $h$ is a log resolution of $D_K$, with $K$ the field of fractions of $\cO$. Also, in this section we only work with local fields $L$ which are either of characteristic zero, or of any characteristic, but with $p_L$ sufficiently large (depending on $f$ and $h$).

With the notation from Section \ref{sec:Lchi} for $\chi$, recall that $\cZ^Z_{f,L,\chi,s}$ equals a rational function $R(t)$ in $t=q_L^{-s}$, and denote by
$$
{\rm NP}^Z_{f,L,\chi}
$$
the set of complex numbers $s_0$ such that $t_0:= q_L^{-s_0}$ is a pole of
$$
(t-q_L)^\delta R(t)
$$
with $\delta=1$ if $\chi$ is trivial and with $\delta=0$ if $\chi$ is nontrivial. This is the \emph{set of nontrivial poles} of $\cZ^Z_{f,L,\chi,s}$.

Define
$$
{\rm LNP}^Z_{f,L,\chi} :=  \sup \{ \Re (  r  )\mid r \in {\rm NP}^Z_{f,L,\chi}\} ,
$$
where $\Re(r)$ stands for the real \change{part} of $r$,
and where the supremum over the empty set is taken to be $-\infty$. This is the (real part of the) \emph{largest nontrivial pole} of $\cZ^Z_{f,L,\chi,s}$.

If $f$ vanishes on $Z(\CC)$, then we define the motivic oscillation index of $f$ along $Z$ as
$$
\moi_Z(f) :=  -   \lim_{M\to+\infty} \sup_{L,\ p_L>M} \sup_{\chi}   \big( {\rm LNP}^Z_{f,L,\chi} \big),
$$
where $L$ runs over the local fields  \change{over any ring of integers containing the coefficients of $f$ and over which $Z$ can be defined,} with residue field characteristic $p_L$ larger than $M$ and $\chi$
runs over all multiplicative characters
$\cO_L^\times\to\CC^\times$. \change{Note that the limit of the suprema in the definition of $\moi_Z(f)$ stabilizes, see \cite[Corollary 3.4]{Cigumodp}. A definition of  $\moi_Z(f)$ for $f$ and $Z$ defined over the algebraic closure of $\Q$ rather than over a ring of integers is given in \cite{CMu}, where also some new inequalities are shown.}

For general $Z$, for any critical value \change{$z_i$ of $f$,} let $Z_i$ be the intersection of $Z$ with the closed subvariety of $\AA^n_{\cO_{z_i}}$ given by $Nf(x)=Nz_i$ for some nonzero integer $N$ such that the $Nz_i$ lie in the integral closure of $\ZZ$ inside $\CC$ and with $\cO_{z_i}$ as in the proof of Theorem \ref{thm:sum}. We define $\moi_Z(f)$ as the minimum over $i$ of the values $\moi_{Z_i}\big(N(f-z_i)\big)$.

It is a result of Igusa (see \cite[Theorem~2]{Igusa2}) that ${\rm LNP}^Z_{f,L,\chi}$ is a negative rational number or $-\infty$. Moreover, if
 $f\big(Z(\CC)\big)=0$, then $\moi_Z(f)$ is either $+\infty$, or it is a positive rational number equal to $\nu_i/N_i$ for some $i\in J$,
 where we use the notation in \S\ref{sec:resolK}. 

The following result gives lower bounds for the exponential sums in terms of $\moi_Z(f)$ in the exponent. It follows directly from
\cite[Theorem~2]{Igusa2} and the observation at the start of \cite[\S4]{Igusa2}. In combination with Proposition \ref{prop:moi:1} below, it shows \change{optimality} of our bounds in the case of non-rational singularities around $Z$. Indeed, \change{when $f(Z(\CC))=0$}, Proposition \ref{prop:moi:1} shows that $\lct_Z(f) = \moi_Z(f)$ if and only if $f$ has non-rational singularities on every open neighborhood $V$ of $Z_\CC$ in $\AA^n_\CC$.

\begin{prop}[\cite{Igusa2}]\label{prop:opt}
Given $f$ and $Z$ as above, there exist infinitely many local fields $L$ over $\cO$ (with arbitrarily large residue field characteristic), constants $c_L>0$, and positive integers $a$, $c$, such that
\begin{equation}\label{eq:lower}
c_L q_L^{ - \moi_Z(f) m} < |E^{Z}_{f,L,\psi}|
\end{equation}
for each $m\in c+a\NN$ and some additive character $\psi$  on $L$ with $m_\psi=m$.
\end{prop}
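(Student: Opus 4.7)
The plan is to extract the lower bound from the asymptotics of Taylor coefficients of Igusa's local zeta function, via the identity of Proposition \ref{004}, following Igusa's original argument in \cite[Theorem~2]{Igusa2}.

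First I reduce to the case $f(Z(\CC))=0$ by splitting $E^Z_{f,L,\psi}$ over the critical values of $f$, exactly as in the proof of Theorem \ref{thm:sum}. The definition of $\moi_Z(f)$ as the minimum over critical values of $\moi_{Z_i}(N(f-z_i))$ ensures that one summand $i_0$ realizes the asymptotically dominant contribution; a lower bound for that summand combined with the strictly faster-decaying upper bounds from Theorem \ref{thm:sum} applied to the remaining summands then transfers to the full sum $E^Z_{f,L,\psi}$.

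Assume now $f(Z(\CC))=0$. By the very definition of $\moi_Z(f)$, there are infinitely many local fields $L$ with arbitrarily large residue characteristic and a multiplicative character $\chi$ on $\cO_L^{\times}$ for which $\cZ^Z_{f,L,\chi,s}$ has a nontrivial pole $s_0$ with $\Re(s_0)=-\moi_Z(f)$. Regarding $\cZ^Z_{f,L,\chi,s}$ as a rational function $R(t)$ in $t=q_L^{-s}$ and performing a partial-fraction expansion, the coefficient of $t^m$ is a finite sum of terms $P_j(m)\, q_L^{-s_j m}$ indexed by the nontrivial poles $s_j$, where $P_j$ is polynomial in $m$ of degree equal to (order of $s_j$) $-\,1$, with nonzero leading term at the maximal real part. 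The pole locations $q_L^{-s_j}$ with $\Re(s_j)=-\moi_Z(f)$ are finitely many specific roots of unity times $q_L^{\moi_Z(f)}$; a Dirichlet-style pigeonhole argument on their arguments produces positive integers $a$ and $c$ such that for every $m\in c+a\NN$ the corresponding phase factors align constructively, yielding
$$
\bigl| \mathrm{Coeff}_{t^{m-c(\chi)}}(\cZ^Z_{f,L,\chi,s}) \bigr| \;\geq\; c'_L\, m^{r-1}\, q_L^{-\moi_Z(f)(m-c(\chi))}
$$
for some integer $r\geq 1$ and some $c'_L>0$.

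The final step is to plug this into the formula (\ref{eq:1.4.4-Bour}) of Proposition \ref{004}. For each admissible $m$, I choose the additive character $\psi$ with $m_\psi=m$ so that the Gauss sum $g_{\chi^{-1},\psi}$ is nonzero; by (\ref{eq:|gchi|}) this gives $|g_{\chi^{-1},\psi}|=q_L^{1/2}/(q_L-1)$ when $c(\chi)=1$, and the $\chi$-summand of (\ref{eq:1.4.4-Bour}) has absolute value at least of order $m^{r-1}\, q_L^{-\moi_Z(f) m + 1/2}$. The remaining terms of (\ref{eq:1.4.4-Bour}) are controlled by upper bounds already established: the trivial-character block by Proposition \ref{trivial}, and the other twisted blocks by Propositions \ref{005} and \ref{estimation1}. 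Following Igusa's argument in \cite[Theorem~2]{Igusa2}, one checks these are dominated by the $\chi$-contribution—if necessary after further refining the arithmetic progression—which yields (\ref{eq:lower}).

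The main obstacle is ruling out two kinds of cancellation: (i)~destructive interference among distinct nontrivial poles of $\cZ^Z_{f,L,\chi,s}$ sharing the common real part $-\moi_Z(f)$, and (ii)~possible near-cancellation between the $\chi$-summand and the other summands of (\ref{eq:1.4.4-Bour}). Obstacle (i) is precisely what forces $m$ to be restricted to the arithmetic progression $c+a\NN$ via the Dirichlet argument, and obstacle (ii) is addressed by the careful joint choice of $L$, $\chi$ and $\psi$ used in Igusa's original treatment, leveraging the asymptotic differences in exponential decay rates between the $\chi$-summand and the other contributions.
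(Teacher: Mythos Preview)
The paper's own proof is simply a citation to \cite[Theorem~2]{Igusa2}, so your proposal is an attempt to reconstruct that classical argument. The overall shape is right, but your treatment of obstacle~(ii) has a genuine gap. You claim the remaining summands in (\ref{eq:1.4.4-Bour}) are dominated by the chosen $\chi$-summand via Propositions~\ref{trivial}, \ref{005}, \ref{estimation1} and ``asymptotic differences in exponential decay rates.'' But Propositions~\ref{trivial} and \ref{estimation1} only yield upper bounds with $\lct_Z(f)$ in the exponent, and since in general $\lct_Z(f)\leq\moi_Z(f)$ (with strict inequality precisely in the rational-singularities case), these bounds are too weak to dominate a term of order $q_L^{-\moi_Z(f)m}$. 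More fundamentally, nothing prevents another character $\chi'$ from also having a nontrivial pole with real part exactly $-\moi_Z(f)$; its contribution then decays at the \emph{same} rate and there is no asymptotic separation to exploit. The same defect undermines your reduction step, where Theorem~\ref{thm:sum} bounds the other critical-value summands with $\sigma_{Z_j}\leq\moi_{Z_j}$ in the exponent, so they need not be strictly faster-decaying either.

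The actual mechanism in Igusa's argument---and in \cite[Corollary~1.4.5]{DenefBour}---is the remaining freedom in $\psi$ for fixed $m_\psi=m$. Every such $\psi$ is of the form $x\mapsto\psi_0(ux)$ for some $u\in\cO_L^\times$, and one has $g_{\chi^{-1},\psi}=\chi(\bar u)\,g_{\chi^{-1},\psi_0}$. Hence for $p_L$ large (so that only characters of conductor~$1$ contribute, by Proposition~\ref{005}), formula (\ref{eq:1.4.4-Bour}) exhibits $\bar u\mapsto E^Z_{f,L,\psi}$ as a finite character expansion $\sum_\chi a_\chi\,\chi(\bar u)$ on $k_L^\times$, with $a_\chi = g_{\chi^{-1},\psi_0}\cdot\text{Coeff}_{t^{m-1}}\big(\cZ^Z_{f,L,\chi,s}\big)$ for nontrivial $\chi$. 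Linear independence of characters gives $\max_{\bar u}|E^Z_{f,L,\psi}|\geq |a_\chi|$ for each $\chi$, so your lower bound on the single coefficient $a_\chi$ transfers directly to $|E^Z_{f,L,\psi}|$ for a suitable $\psi$ depending on $m$, with no comparison across characters needed at all.
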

\begin{proof}
The assertion follows from \cite[Theorem~2]{Igusa2} (see also \cite[Corollary~1.4.5]{DenefBour} and the comment to \cite{DenefBour} at the end of \cite{DenefVeys}).
\end{proof}

\begin{prop}\label{prop:moi:1}
Given $f$ and $Z$ as above, there exists an open neighborhood $V$ of $Z_\CC$ in $\AA^n_\CC$ such that the following equivalences hold. All hypersurfaces defined by $f-b$,  for $b\in\CC$, have rational singularities on $V$ if and only if  $\moi_Z(f) >1$.
\change{If furthermore $f$ vanishes on $Z(\CC)$, then} we have $\moi_Z(f) \leq 1$ if and only if $\moi_Z(f) = \lct_Z(f)$.
\end{prop}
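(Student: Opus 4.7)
Both equivalences reduce to $f\big(Z(\CC)\big)=0$: the definition of $\moi_Z(f)$ as a minimum over critical values, together with the local nature of rational singularities, lets us treat one critical value at a time. Assume $Z\subseteq D:=\{f=0\}$. By Denef's formula (Proposition~\ref{005}), every nontrivial pole of $\cZ^Z_{f,L,\chi,s}$ has the form $-\nu_i/N_i$ for some $E_i$ in the log resolution $h\colon Y\to \AA^n_K$ with $h(E_i)\cap Z_\CC\neq\emptyset$; hence $\moi_Z(f)\geq \lct_Z(f)$. A direct residue computation from formula~(\ref{eq:c-denef}) applied to the trivial character shows that for every such $i$ with $\nu_i/N_i=\lct_Z(f)$, the pole $-\lct_Z(f)$ is attained with strictly positive residue in $\cZ^Z_{f,L,1,s}$ for large~$L$: the residue is a sum of non-negative Lang--Weil counts, at least one of which is strictly positive (choose any $I\ni i$ maximal with $E_I^\circ\cap h^{-1}(Z)\neq\emptyset$).

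For Part~(B), the inclusion $Z\subseteq D$ forces $\lct_Z(f)\leq 1$, since at any smooth point $x$ of $D$ in a neighborhood of $Z_\CC$ the log canonical threshold equals~$1$; hence $\moi_Z(f)=\lct_Z(f)\Rightarrow \moi_Z(f)\leq 1$. Conversely, assume $\moi_Z(f)\leq 1$. If $\lct_Z(f)<1$, the pole $-\lct_Z(f)\neq -1$ automatically lies in ${\rm NP}^Z_{f,L,1}$ by the residue calculation, giving $\moi_Z(f)\leq \lct_Z(f)$ and hence equality. If $\lct_Z(f)=1$, then $\moi_Z(f)\geq 1$ and the hypothesis forces $\moi_Z(f)=1=\lct_Z(f)$.

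For Part~(A), if $f$ has a repeated factor $f_j^{a_j}$ with $a_j\geq 2$, then $\{f=0\}$ is non-reduced hence not rational, while the strict transform of $\{f_j=0\}$ yields $\nu_i/N_i=1/a_j<1$ forcing $\moi_Z(f)<1$ by the above; both conditions fail consistently. Now assume $f$ is reduced. By the log-discrepancy criterion for reduced hypersurfaces, $D$ has rational singularities on $V$ iff $(X,D)$ is plt on~$V$, iff every exceptional $E_i$ with center in $V$ satisfies $\nu_i>N_i$ \emph{and} distinct irreducible components of $D$ do not meet in~$V$. Match with $\moi_Z(f)>1$ via a pole analysis: any exceptional $E_i$ with $\nu_i<N_i$ over $V$ gives an attained pole at real part $>-1$ via the trivial character; two components of $D$ meeting at a point of $V$ produce a multi-pole of $\cZ^Z_{f,L,1,s}$ at~$-1$ in ${\rm NP}$; and an exceptional $E_i$ with $\nu_i=N_i>1$ yields an attained pole at~$-1$ via some nontrivial character (see obstacle below). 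In each case $\moi_Z(f)\leq 1$, so $\moi_Z(f)>1$ forces none of these obstructions to occur, matching the plt criterion. The neighborhood~$V$ is chosen small enough that only critical points arising from $f$ itself contribute.

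\textbf{Main obstacle.} The delicate case is an exceptional $E_i$ with $\nu_i=N_i>1$ and $h(E_i)\cap Z_\CC\neq\emptyset$, where the trivial character may only give a simple pole at $-1$ excluded from ${\rm NP}^Z_{f,L,1}$, so a nontrivial character is needed to witness $\moi_Z(f)\leq 1$. Via a converse to Corollary~\ref{nontrivial}: if no nontrivial character of prime order $d\mid N_i$ produces a nonzero character sum $\sum_{a\in E_i^\circ\cap h^{-1}(Z)(k_L)}\chi(u(a))$ for infinitely many $L$, then $u|_{E_i^\circ}$ is a $d$-th power on a non-empty open, and the power condition is witnessed by $(\{i\},W,g,d)$. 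Theorem~\ref{thm_bound} then yields
\[
\lct_Z(f)\leq \tfrac{1}{d}+\nu_i-N_i\,\lct_Z(f),
\]
which for $\lct_Z(f)=\nu_i/N_i=1$ (and $\nu_i=N_i$) simplifies to $1\leq 1/d$, contradicting $d>1$. Hence some nontrivial character does attain the pole at~$-1$, completing the match with the rational-singularity criterion.
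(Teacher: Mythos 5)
Your overall plan is reasonable and the easy pieces (reduction to $f(Z(\CC))=0$, the inequality $\moi_Z(f)\geq\lct_Z(f)$ from Denef's formula, the numerical criterion for rational singularities, and the handling of $\lct_Z(f)<1$) are in order, though the claim that the residue of $\cZ^Z_{f,L,1,s}$ at $-\lct_Z(f)$ is "a sum of non-negative Lang--Weil counts" needs care: the residues of the individual factors $\frac{t^{N_i}q^{-\nu_i}}{1-t^{N_i}q^{-\nu_i}}$ are not all positive, so the correct route is via Landau's theorem for Dirichlet series with non-negative coefficients (this is \cite[Theorem~2.7]{VeysZ}, which the paper cites), not a "direct residue computation."

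The fatal problem is in the delicate case $\nu_{i}=N_{i}>1$. You invoke a ``converse to Corollary~\ref{nontrivial}'' asserting: if no nontrivial character of prime order $d\mid N_i$ gives a nonzero sum $\sum_{a}\chi(u(a))$, then $u|_{E_i^\circ}$ is a $d$-th power. This is backwards. If $u=g^d$ on an open set, then $\chi(u(a))=\chi(g(a))^{d}=1$ for $\chi$ of order $d$, so the character sum is $\approx q_L^{\dim}$, i.e., \emph{large}, not zero. The true contrapositive of Corollary~\ref{nontrivial} is: a large ($\gg q^{r-1/2}$) character sum forces the power condition. Thus from "all the sums vanish" you may only conclude that $u$ is \emph{not} a $d$-th power, which is precisely the hypothesis under which Theorem~\ref{thm_bound} has nothing to say — so your attempted contradiction $1\leq 1/d$ never materializes, and the argument does not establish $\moi_Z(f)\leq 1$ in this case. (There is also a secondary gap: even if some $c_{\{i\},Z,L,\chi}$ is nonzero, the pole at $s=-1$ of $\cZ^Z_{f,L,\chi,s}$ could a priori cancel against contributions from other index sets $I\ni i$, so a single nonvanishing coefficient in Denef's formula does not by itself certify the pole.)

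For comparison, the paper's proof of the hard direction ($\moi_Z(f)>1\Rightarrow$ rational singularities near $Z$) is by a completely different mechanism, essentially due to Igusa: $\moi_Z(f)>1$ gives $L^1$-integrability of $\psi\mapsto E^Z_{f,L,\psi}$ over $L^\vee$, which by \cite[Theorem~2]{Igusa2} is equivalent to convergence of fiber-volume integrals $F^Z_L(k)$ as $k\to 0$; combining this with \cite[Lemma~4]{Igusa2} and a positivity argument for Schwartz--Bruhat test functions (or, alternatively, \cite[Theorem~3.4]{AizAvni}) yields the rationality of the singularities. This integrability route sidesteps any need to exhibit an actual pole at $-1$ witnessed by a nontrivial character, which is exactly what your argument needs but cannot currently produce.
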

\begin{proof}
The proof relies mainly on \cite[\S3]{Igusa2}, using the fact that
 the observation at the beginning of  \cite[\S4]{Igusa2} removes the condition that $0$ is the only critical value.
Clearly it is enough to consider the case when $f\big(Z(\CC)\big)=0$. In what follows, we use the notation
in \S\ref{sec:resolK}
for a log resolution $h$.
If $f$ has no singularities on some $V$ containing $Z$, then $\moi_Z(f)=+\infty$, since then the sets ${\rm NP}^Z_{f,L,\chi}$ are empty whenever $p_L$ is large.

Note that the hypersurface defined by $f$ has rational singularities on some $V$ containing $Z_{\CC}$ if and only if for each $i\in J$ with $h(E_i)\cap Z$ non-empty either  $\nu_i/N_i>1$, or
  $(N_i,\nu_i)=(1,1)$ and for every other $E_{i'}$ with $(N_{i'},\nu_{i'})=(1,1)$, we have $E_i\cap E_{i'}\cap h^{-1}(Z)=\emptyset$
  (indeed, the latter condition is known as the pair $(\AA^n_{\CC},D)$ having canonical singularities in a neighborhood of $Z_{\CC}$;
  this is equivalent with $D$ having rational singularities in a neighborhood of $Z_{\CC}$ by \cite[Theorems~7.9 and 11.1]{Kollar}).
If these properties on the numerical data hold, then \cite[Theorem~2]{Igusa2} implies that  ${\rm LNP}^Z_{f,L,\chi}<-1$ for all $L$ and all $\chi$ whenever $p_L$ is large, and hence that $\moi_Z(f) >1$.

Conversely, suppose that $\moi_Z(f) >1$. \change{We need} to find an open neighborhood $V$ of $Z_{\CC}$ such that the hypersurface defined by $f$
has rational singularities in $V$.
 For this, we follow an argument already present in Igusa's work.
Since $\moi_Z(f) >1$, it follows from \cite[Theorem~2]{Igusa2} that for $p_L$ large, the function $L^\vee\to\CC$ sending a character $\psi$ in the (topological) dual $L^\vee$ of $(L,+)$ to the complex number $E^Z_{f,L,\psi}$ is an ${\rm L}^1$-function (with respect to the Haar measure).  By
 \cite[Theorem~2]{Igusa2}, for any $L$, this function $\psi\mapsto E^Z_{f,L,\psi}$  is ${\rm L}^1$ on $L^\vee$ if and only if the limit of
$$
F^Z_L(k):= \int_{ \{x\in \cO_{L}^n\mid \overline x\in Z(k_L),\ f(x)=k \} } |dx/df|
$$
exists for $k\to 0$, where $|dx/df|$ stands for the volume associated to the Gelfand-Leray differential form on $f(x)=k$ for smooth values $k\in L$ of $f$.
 By \cite[Lemma~4]{Igusa2}, we can find $V$ as needed if and only if for all $L$ with $p_L$ sufficiently large, and all real valued, non-negative Schwartz-Bruhat functions $\Phi$ on $L^n$ such that $f(\Supp \Phi)$ contains no critical value of $f$ other than $0$, where $\Supp \Phi$ is the support of $\Phi$, we have that the limit of
$$
F_\Phi(k):= \int_{ \{x\in L^n\mid \ f(x)=k \} }  \Phi(x) |dx/df|
$$
for $k\to 0$ exists. Note that $F_\Phi$ is non-negatively real valued since $\Phi$ is.
Moreover, it follows from \cite[Theorem~2]{Igusa2} that if the limit of $F_\Phi$ for $k\to 0$ does not exist, then for each $C$ there is $k\in L^\times$ with
$$
C < F_\Phi(k)
$$
\change{(that is, arbitrarily large values for $F_\Phi$ occur in this situation)}.
Note also that if  $f(\Supp \Phi)$ contains no critical value of $f$, then $F_\Phi$ is continuous on $L$. From this discussion and the additivity of $F_\Phi$ in non-negative real valued $\Phi$, it follows that we can find $V$ as desired, namely, an open neighborhood
of
 $Z_\CC$ in $\AA^n_\CC$ such that the hypersurface defined by $f$ has rational singularities on $V$. (Alternatively, one can use \cite[Theorem~3.4]{AizAvni} and the equivalence between its statements a and c to shorten the above argument.)

Let us now prove that $\moi_Z(f) \leq 1$ if and only if $\moi_Z(f) = \lct_Z(f)$. This follows again by an argument already present in Igusa's work.
The fact that if $\moi_Z(f) = \lct_Z(f)$, then $\moi_Z(f) \leq 1$ is clear, since we always have $\lct_Z(f) \leq 1$. Suppose now that $\moi_Z(f) \leq 1$.  As soon as $L$ contains a large enough finite field extension of the field of fractions $K$ of $\cO$, we have that $s=-\lct_Z(f)$ is a pole of
$\cZ^Z_{f,L,\chi_{\rm triv},s}$, by the proof of
\cite[Lemma~4]{Igusa2} (alternatively, and with more details, by \cite[Theorem~2.7]{VeysZ}). We  always
have $\moi_Z(f) \geq \lct_Z(f)$, since $\moi_Z(f)$ is either $+\infty$ or $\nu_i/N_i$ for some $i\in J$. If  $\lct_Z(f)<1$, then we are
 done, since \change{then clearly $-\lct_Z(f)$} is also a pole of $(t-q_L)R(t)$, where $R(q_L^{-s})= \cZ^Z_{f,L,\chi_{\rm triv},s}$. On the other hand,
 if $\lct_Z(f)=1$, then we are done, since $\moi_Z(f) \leq 1$ and $\moi_Z(f) \geq \lct_Z(f)$.
\end{proof}

Note that $\moi_Z(f)$ always gives upper bounds with constants $c_L$ depending on $L$ by Igusa's work \cite{Igusa1} and \cite{Igusa2} (see also \cite[Corollary~1.4.5]{DenefBour} and \cite{DenefVeys}), as follows.

\begin{prop}[Igusa]\label{prop:moi:upper}
Given $f$ and $Z$ there exist $M$ and for each $L$ over $\cO$ with $p_L>M$ a constant $c_L>0$ such that for all nontrivial additive characters $\psi$ on $L$ with $m_\psi\geq 2$, we have
\begin{equation}\label{eq:upper:moi}
 |E^{Z}_{f,L,\psi}| < c_L q_L^{ - \moi_Z(f) m_\psi} m_\psi^{n-1}.
\end{equation}
\end{prop}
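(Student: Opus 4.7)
The proof follows Igusa's classical approach. First, by the decomposition (\ref{summand}) across critical values (valid for $m_\psi\geq 2$), I would reduce to the case where $f$ vanishes on $Z(\CC)$: since $\moi_Z(f)$ is defined as a minimum over the contributions from each critical value, it suffices to bound each summand separately with the same rate, after possibly enlarging $c_L$.

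Next, apply Proposition \ref{004} to express $E^Z_{f,L,\psi}$ as a finite sum of $\cZ^Z_{f,L,1,0}$ plus coefficients at $t^{m-c(\chi)}$ (where $t=q_L^{-s}$ and $m=m_\psi$) of certain rational functions $R_\chi(t)$. For the trivial character, the auxiliary factor $(t-q_L)/((q_L-1)(1-t))$ cancels the standard pole at $t=1$ of $\cZ^Z_{f,L,1,s}$, so that the poles of each $R_\chi$ are exactly the nontrivial poles of $\cZ^Z_{f,L,\chi,s}$ in the sense of Section \ref{sec:opt}. By Proposition \ref{005}, only finitely many $\chi$ contribute (those with $c(\chi)\leq 1$ and order dividing $\prod_{i\in T_K} N_i$), and each $R_\chi$ is rational with denominator a product of factors $(1-q_L^{-N_is-\nu_i})$; in particular no pole has order exceeding $n$.

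The crux is then the standard Tauberian estimate: for a rational function $R(t)=\sum_k a_k t^k$ whose poles $\alpha_j^{-1}$ all satisfy $|\alpha_j^{-1}|\geq \rho$ and whose orders are at most $n$, partial fractions yield $|a_k|\leq C(R)\,\rho^{-k} k^{n-1}$ with a constant $C(R)$ depending on the numerators of the partial fraction decomposition. By the very definition of $\moi_Z(f)$, there exists $M$ such that for $p_L>M$ every pole of each $R_\chi$ has real part at most $-\moi_Z(f)$, equivalently every pole in the $t$-variable has modulus at least $q_L^{\moi_Z(f)}$. Applying the estimate with $\rho=q_L^{\moi_Z(f)}$ gives each relevant coefficient a bound of the form $C_{L,\chi}\, q_L^{-\moi_Z(f) m}\, m^{n-1}$; summing over the finitely many contributing $\chi$, with the Gauss sum prefactors $g_{\chi^{-1},\psi}$ estimated using (\ref{eq:|gchi|}), yields the inequality (\ref{eq:upper:moi}) with constant $c_L$.

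The main subtlety, and what distinguishes this result from Theorem \ref{thm:sum}, is that one must accept $c_L$ depending on $L$: the constants $C(R_\chi)$ encoding the numerators of the partial fraction decompositions depend on the zeta function data $c_{I,Z,L,\chi}$ of Proposition \ref{005}, for which only the crude bound (\ref{eq:cI1}) is available in general. Obtaining a constant uniform in $L$ would require the sharper estimates supplied by Corollary \ref{cor:num} via the log canonical threshold bound of Theorem \ref{thm_bound}, which is precisely why the sharper invariant $\sigma_Z(f)$ (rather than $\moi_Z(f)$) is what appears in Theorem \ref{thm:sum} with a uniform constant; here, by contrast, accepting an $L$-dependent $c_L$ is harmless.
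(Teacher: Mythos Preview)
Your argument is correct and is precisely the classical route the paper attributes to Igusa (citing \cite{Igusa1}, \cite{Igusa2}, \cite[Corollary~1.4.5]{DenefBour}, \cite{DenefVeys}) without reproducing a proof. One small slip in your write-up: the pole being removed for the trivial character is at $t=q_L$ (i.e., $s=-1$), not at $t=1$; the zeta function $\cZ^Z_{f,L,1,s}$ is regular at $t=1$, while the factor $(t-q_L)$ in the numerator kills the possible pole at $t=q_L$, and the constant term $\cZ^Z_{f,L,1,0}$ then absorbs the contribution of the $1/(1-t)$ factor so that the resulting rational function has poles exactly at the nontrivial poles. Your conclusion is unaffected. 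Note also that the existence of a single $M$ after which \emph{all} nontrivial poles satisfy $\Re(s)\leq -\moi_Z(f)$ uses that the limsup in the definition of $\moi_Z(f)$ stabilizes (the paper records this, citing \cite[Corollary~3.4]{Cigumodp}); you are implicitly using this. Finally, your diagnosis of why $c_L$ must depend on $L$ is the right one in spirit: the apparent poles coming from Proposition~\ref{005} at $s=-\nu_i/N_i$ may lie strictly to the right of $-\moi_Z(f)$ and only cancel after summation, so the partial-fraction coefficients with respect to the \emph{true} poles are governed by these cancellations and cannot be controlled uniformly by (\ref{eq:cI1}) alone.
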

Note that the exponent $n-1$ of $m_\psi$ in (\ref{eq:upper:moi}) is not always optimal and is related to the order of the largest nontrivial pole of Igusa's zeta functions; one can define naturally a multiplicity of the motivic oscillation index capturing the optimal exponent of $m_\psi$. Let us finally recall the strong form of Igusa's conjecture with the motivic oscillation index around $Z$, predicting that the constants $c_L$ in (\ref{eq:upper:moi}) can be taken independently from $L$ as soon as $p_L$ is large enough.  By the work of this paper, this remains open in general only if $\moi_Z(f)>1$.

\change{Let us finally note that in the last chapter of Nguyen's PhD thesis \cite{Kien:thesis}
it was shown that the inequality from (\ref{eq_thm_Z_1/2}) and Theorem~\ref{thm:sum} are equivalent 
 (without proving either in general).}   

\bibliographystyle{amsplain}
\bibliography{anbib}
\end{document}